\title{\bf The Solubility Graph Associated With a Finite Group}
\author{{\sc B. Akbari,   Mark L. Lewis,  J. Mirzajani  and} \\[0.2cm]  {\sc A. R. Moghaddamfar}
\\[0.3cm] {\small Dedicated to Professor M. R. Darafsheh on the occasion of his 70th birthday}}
\newenvironment{proof}{\noindent {\em {Proof}}.}{$\square$
\medskip}
\newtheorem{theorem}{Theorem}[section]
\newtheorem{corollary}[theorem]{Corollary}
\newtheorem{remark}[theorem]{Remark}
\newtheorem{lm}[theorem]{Lemma}
\begin{document}
\maketitle
\begin{abstract}

\noindent Let $G$ be a finite group. The solubility graph associated with the finite group $G$, denoted by $\Gamma_{\cal S}(G)$, is a simple graph whose vertices are the non-trivial elements of $G$, and there is an edge between two distinct elements $x$ and $y$ if and only if $\langle x, y\rangle$ is a soluble subgroup of $G$. In this paper, we examine some properties of solubility graphs.
\end{abstract}

\renewcommand{\baselinestretch}{1.1}
\def\thefootnote{ \ }
\footnotetext{{\em $2010$ Mathematics Subject Classification}:
20D10, 20D05, 20D20, 05C25.\\
{\bf Keywords}: finite group, solubilizer, solubility graph.}


\section{Introduction and Motivation}

All groups considered in this paper are assumed to be finite.  We will follow a graph theory approach, here.  Given a group $G$, we define {\em solubility graph} of $G$ to be the graph whose vertex set is $G$ and there is an edge between $x$ and $y$ when $\langle x, y\rangle$ is soluble.  We denote this graph by $\Gamma_{\cal S} (G)$.  For a background in graph theory, we suggest the reader consult \cite{bondy}.

One of the more interesting results regarding solvable groups is due to J. Thompson \cite{Thompson} and states that $G$ is soluble if and only if for every $x, y\in  G$ the subgroup $\langle x, y\rangle$ is soluble.  This result translates nicely to the graph as $G$ is soluble if and only if $\Gamma_{\cal S} (G)$ is complete.

We use $R(G)$ to denote the {\it soluble radical} of $G$, which is the largest soluble normal subgroup of $G$.  R. Guralnick, K. Kunyavski$\rm \check i$, E. Plotkin and A. Shalev proved in \cite{GKPS} that if $x$ is an element of the group $G$, then  $x\in R(G)$ if and only if the subgroup $\langle x, y\rangle$ is soluble for all $y\in G$.  In terms of the graph, this theorem translates to if $x$ is an element of the group $G$, then  $x\in R(G)$ if and only if $x$ is a universal vertex of  $\Gamma_{\cal S} (G)$ where a {\it universal vertex} is a vertex that is adjacent to every other vertex in the graph.  To understand the connectivity of the graph, it is useful to omit the universal vertices.

With this in mind, we define $\Delta_{\cal S}(G)$ to be the subgraph of  $\Gamma_{\cal S} (G)$ that is induced by the set $G \setminus R(G)$.  In Problem 3.1 of \cite{BNN}, they ask if $\Delta_{\cal S} (G)$ is connected for all groups $G$.  In our first theorem, we prove that this is true.

\begin{theorem}\label{main one}
If $G$ is a group, then $\Delta_{\cal S} (G)$ is connected.
\end{theorem}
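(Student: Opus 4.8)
The plan is to proceed in two stages: first reduce to the case where the soluble radical is trivial, and then, in that case, exhibit a ``hub'' consisting of the involutions to which every other vertex can be joined.

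First I would pass to the quotient $\bar G = G/R(G)$, noting that $R(\bar G)=1$. The key observation is that for $x,y \in G\setminus R(G)$ the subgroup $\langle x,y\rangle$ is soluble if and only if $\langle \bar x,\bar y\rangle$ is soluble: one direction is that quotients of soluble groups are soluble, and the converse holds because $\langle x,y\rangle \cap R(G)$ is soluble and $\langle x,y\rangle/(\langle x,y\rangle\cap R(G)) \cong \langle \bar x,\bar y\rangle$, so $\langle x,y\rangle$ is soluble-by-soluble. Hence each fibre $xR(G)$ is a clique of $\Delta_{\cal S}(G)$, and two vertices in distinct fibres are adjacent exactly when their images are adjacent in $\Delta_{\cal S}(\bar G)$. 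Thus $\Delta_{\cal S}(G)$ is a blow-up of $\Delta_{\cal S}(\bar G)$ by cliques, and such a clique blow-up is connected if and only if its base is; so it suffices to treat the case $R(G)=1$.

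Now assume $R(G)=1$, so $G$ is non-soluble and, by the Odd Order Theorem, has even order. Since any two involutions generate a dihedral, hence soluble, group, all involutions of $G$ are pairwise adjacent and therefore lie in a single component $\mathcal{C}_0$; moreover any $y$ commuting with an involution $t$ has $\langle y,t\rangle$ abelian, so $y$ is adjacent to $t$ and lies in $\mathcal{C}_0$, and likewise if some involution normalises $\langle y\rangle$. The goal becomes to show every vertex lies in $\mathcal{C}_0$. Using that the nontrivial powers of $x$ form a clique containing $x$, I would reduce to $x$ of prime order; if that prime is $2$, or more generally if $x$ has an involution among its powers, then $x \in \mathcal{C}_0$ immediately. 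When $x$ has odd prime order I would next show that if $x$ lies in any proper subgroup $H$ of even order then $x \in \mathcal{C}_0$, by induction on $|G|$: if $x\in R(H)$, or if some involution of $H$ lies in $R(H)$, then that involution is adjacent to $x$ by the criterion of Guralnick, Kunyavski$\rm \check i$, Plotkin and Shalev applied inside $H$; otherwise $\Delta_{\cal S}(H)$ is connected by induction and joins $x$ to an involution of $H$, these edges persisting in $\Gamma_{\cal S}(G)$.

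The remaining, and genuinely hardest, case is an odd-order element $x$ whose entire local structure is odd---the normaliser $N_G(\langle x\rangle)$, and indeed every proper subgroup containing $x$, having odd order---so that no involution is directly adjacent to $x$. The element of order $7$ in $\mathrm{PSL}(2,7)$, whose only proper overgroup is the Frobenius group of order $21$, is the representative difficulty: there $x$ reaches the hub only through a length-two path, via an order-$3$ neighbour lying in an even-order soluble subgroup. To handle this uniformly I would use that $x$'s odd maximal overgroup $M$ is itself soluble, so $M\setminus\{1\}$ is a clique lying in one component with $x$, and then argue that $M$ must contain some element $w$ that does lie in a proper even-order subgroup of $G$ (equivalently, is normalised by an involution), whence $w \in \mathcal{C}_0$ by the previous step and so $x \in \mathcal{C}_0$. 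Establishing that such a $w$ always exists---ruling out a conjugation-invariant ``all-odd'' family of non-hub vertices---is the crux; I expect to need the structure of groups with trivial soluble radical (the socle as a product of non-abelian simple groups, together with facts about Sylow normalisers in simple groups), and this local-to-global step is where the main effort of the proof will lie.
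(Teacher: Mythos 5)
Your first two stages (the reduction to $R(G)=1$ via the fibre argument, and the clique of involutions as a hub) coincide exactly with the paper's proof, and your intermediate reductions are sound: passing to prime order via powers, and the induction handling elements that lie in a proper even-order subgroup. But the proposal has a genuine gap, and you name it yourself. You never prove the crux claim that an odd-order maximal overgroup $M$ of $x$ contains an element $w$ lying in a proper even-order subgroup of $G$; what you offer there ("I expect to need the structure of groups with trivial soluble radical, the socle, Sylow normalisers") is a plan for a proof, not a proof. This claim is the entire content of the theorem in the hard case, and it is also a stronger, more rigid statement than what the known machinery provides: it demands a length-two bridge from $x$ to the hub through a single intermediate element, whereas in general one can only guarantee a bounded chain through several primes, with conjugation needed at each step.

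The idea that fills this gap in the paper is the soluble Grueneberg--Kegel graph $\Gamma_{\rm s}(G)$: its vertices are the primes dividing $|G|$, with $p \approx q$ whenever some soluble subgroup of $G$ has order divisible by $pq$. Abe and Iiyori \cite{Abe-Iiyori} proved that this graph is connected for every finite group, and this is the local-to-global input your sketch is missing. Given a nontrivial $g$, the paper takes a prime $p$ dividing $|g|$, a power $g^n$ of order $p$, and a path $p = p_1 \approx p_2 \approx \cdots \approx p_k = 2$ in $\Gamma_{\rm s}(G)$. Each prime-edge $p_i \approx p_{i+1}$ is witnessed by adjacent elements $y_i$, $x_{i+1}$ of orders $p_i$, $p_{i+1}$ inside a soluble subgroup, and two elements of the same prime order $p_i$ can be conjugated into a common Sylow $p_i$-subgroup, where they are adjacent because a $p_i$-group is soluble. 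Conjugating the witnessing edges appropriately splices these pieces into an honest path in $\Delta_{\cal S}(G)$ from $g$ to an involution. Note that this handles your hardest case uniformly: it never requires $x$ itself, or any single element of $M$, to lie in an even-order subgroup, because the path is allowed to move between conjugates. It also yields the quantitative refinement in the paper: by Hagie \cite{Hagie} the distance from $p$ to $2$ in $\Gamma_{\rm s}(G)$ is at most $3$, so every vertex is within distance $5$ of an involution, giving diameter at most $11$ for $\Delta_{\cal S}(G)$.
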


The authors of \cite{BNN} ask in Problem 3.2 if there is a bound on the diameter of $\Delta_{\cal S} (G)$ when it is connected.  As part of our argument in Theorem \ref{main one}, we will show that the diameter of $\Delta_{\cal S} (G)$ is at most $11$.  However, we do not have groups that come anywhere near this bound, and we believe that the correct bound is probably much smaller.

For most of this paper, we focus on the sets of neighbors of elements in $\Gamma_{\cal S} (G)$.  With this in mind, we define for an element $x \in G$, the set $\mathsf{Sol}_G (x) = \{ g \in G \mid \langle x, g \rangle \ \mbox{is soluble} \}.$  We call this set the {\em solubilizer} of $x$ in $G$.  In general, this set will not be a subgroup of $G$.  Note that $\mathsf {Sol}_G (x) = G$ if and only if $x \in R(G)$.  We now consider how restrictions on the structure of this set influence the structure of $G$.  

\begin{theorem}\label{introtwo}
Let $G$ be a group.  If there exists an element $x \in G$ so that the elements of $\mathsf{Sol}_G (x)$ commute pairwise, then $G$ is abelian.
\end{theorem}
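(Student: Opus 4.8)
The plan is to induct on $|G|$, showing that as soon as $x \notin R(G)$ one is forced into an impossible situation, namely $\mathsf{Sol}_G(x) = C_G(x)$ being an abelian maximal subgroup of $G$.

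First I would identify $\mathsf{Sol}_G(x)$ with $C_G(x)$. Since $x \in \mathsf{Sol}_G(x)$ and the elements of $\mathsf{Sol}_G(x)$ commute pairwise, every element of $\mathsf{Sol}_G(x)$ centralizes $x$, so $\mathsf{Sol}_G(x) \subseteq C_G(x)$; and if $g$ commutes with $x$ then $\langle x, g\rangle$ is abelian, hence soluble, so $C_G(x) \subseteq \mathsf{Sol}_G(x)$. Thus $\mathsf{Sol}_G(x) = C_G(x)$ is abelian. If $x \in R(G)$ then $\mathsf{Sol}_G(x) = G$, so $G = C_G(x)$ is abelian and there is nothing more to do; hence I assume $x \notin R(G)$, which in particular gives $x \neq 1$ and $C_G(x) \neq G$.

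Next I would run the induction. If $H$ is a proper subgroup of $G$ with $x \in H$, then $\mathsf{Sol}_H(x) = H \cap \mathsf{Sol}_G(x)$ again has pairwise commuting elements, so the inductive hypothesis (applicable as $|H| < |G|$) makes $H$ abelian; consequently $H \subseteq C_G(x)$. Therefore every proper subgroup of $G$ that contains $C_G(x)$ also contains $x$, is abelian, and hence coincides with $C_G(x)$ --- that is, $C_G(x)$ is a \emph{maximal} subgroup of $G$, and it is abelian. Now I would invoke the classical fact that a finite group possessing an abelian maximal subgroup is soluble (via Thompson's theorem that Frobenius kernels are nilpotent, this reduces to the cases where the maximal subgroup is normal or self-normalizing). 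This gives that $G$ is soluble, so $R(G) = G \ni x$, contradicting $x \notin R(G)$. Hence the case $x \notin R(G)$ does not occur, $x \in R(G)$, and $G$ is abelian.

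If one prefers not to quote the theorem on abelian maximal subgroups, the contradiction can be obtained directly once $M := C_G(x)$ is known to be abelian and maximal: when $Z(G) \neq 1$, the hypothesis descends to $G/Z(G)$ --- one checks that $\mathsf{Sol}_{G/Z(G)}(xZ(G)) = C_{G/Z(G)}(xZ(G))$ and that its elements still commute pairwise, so induction makes $G/Z(G)$ abelian and $G$ soluble; when $Z(G) = 1$, one has $C_G(a) = M$ for all $1 \neq a \in M$, whence $M \cap M^g = 1$ for $g \notin M$ and $N_G(M) = M$, so $G$ is a Frobenius group with abelian complement $M$ and is again soluble. I expect the only real obstacle to be the step that upgrades the hypothesis to the statement that $C_G(x)$ is \emph{maximal}; after that, solubility --- and the clash with $x \notin R(G)$ --- is a matter of standard structure theory.
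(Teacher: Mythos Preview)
Your argument is correct and, at its core, matches the paper's: both reduce to an abelian maximal subgroup, obtain a Frobenius structure when that subgroup is self-normalizing, and conclude solubility via Thompson's theorem on Frobenius kernels. The packaging differs slightly. The paper first isolates and proves the auxiliary statement that an abelian subgroup which is maximal among \emph{soluble} subgroups must equal $G$, and then checks that $\mathsf{Sol}_G(x)$ (shown to be an abelian subgroup) is maximal among soluble subgroups. Your identification $\mathsf{Sol}_G(x)=C_G(x)$ is a clean shortcut the paper does not make; combined with your inductive step ``every proper subgroup containing $x$ is abelian,'' it lands you directly on an abelian \emph{maximal} subgroup, after which you may quote Herstein's theorem. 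The paper's route buys a reusable lemma of independent interest; yours buys brevity. One small remark on your alternative ending: the equality $\mathsf{Sol}_{G/Z(G)}(xZ(G))=C_{G/Z(G)}(xZ(G))$ is not needed---what you actually use is that $\mathsf{Sol}_{G/Z(G)}(xZ(G))=\mathsf{Sol}_G(x)/Z(G)=C_G(x)/Z(G)$ is abelian, which already gives the pairwise-commuting hypothesis in the quotient.
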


Hence, if $G$ has an element $x$ so that $\mathsf{Sol}_G (x)$ is an abelian subgroup of $G$, then $G$ is an abelian group.  As we mentioned above, $\mathsf{Sol}_G (x)$ need not be a subgroup.  It is natural to weaken the previous statement to ask what can be said if $\mathsf{Sol}_G (x)$ is a subgroup of $G$ for some element $x \in G$.  We note that this does not imply that $G$ must be soluble.  Consider $A_5$ and observe that if $x$ is an element of $A_5$ whose order is $3$ or $5$, then $\mathsf{Sol}_G (x)$ will be a subgroup of $G$.  However, when we assume that {\em all} of the solubizers are subgroups, we do indeed see that the group must be soluble.

\begin{theorem}\label{introthree}
Let $G$ be a group.  Then $G$ is soluble if and only if $\mathsf{Sol}_G (x)$ is a subgroup of $G$ for all $x \in G$.
\end{theorem}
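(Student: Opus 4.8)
The forward implication is immediate: if $G$ is soluble then $R(G)=G$, so by the result of Guralnick, Kunyavskii, Plotkin and Shalev quoted in the introduction we have $\mathsf{Sol}_G(x)=G$ for every $x\in G$; equivalently every $\langle x,g\rangle$ is soluble, so each $\mathsf{Sol}_G(x)$ is trivially a subgroup. For the converse the plan is to argue by contradiction, taking a counterexample $G$ of least order, that is, a non-soluble group in which $\mathsf{Sol}_G(x)$ is a subgroup for every $x\in G$, and to show that $G$ must be forced down to a non-abelian simple group, where a rigidity argument produces a contradiction.

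I would first perform two reductions. For the soluble radical $R=R(G)$, write $\bar G=G/R$ and note the elementary fact that $\langle x,g\rangle$ is soluble if and only if its image $\langle\bar x,\bar g\rangle$ is soluble: one direction is passage to a quotient, and the other uses that $\langle x,g\rangle\cap R$ is soluble (it lies in $R$) together with the closure of solubility under extensions. Hence $\mathsf{Sol}_G(x)$ is precisely the full preimage of $\mathsf{Sol}_{\bar G}(\bar x)$, and since $R\subseteq\mathsf{Sol}_G(x)$ one set is a subgroup exactly when the other is, so $\bar G$ inherits the hypothesis; as $\bar G$ is non-soluble, minimality forces $R=1$. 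Second, for any proper subgroup $K<G$ and any $k\in K$ one has $\mathsf{Sol}_K(k)=\mathsf{Sol}_G(k)\cap K$, an intersection of subgroups, so $K$ also satisfies the hypothesis and is therefore soluble by minimality. Thus every proper subgroup of $G$ is soluble; combined with $R(G)=1$, inspecting a minimal normal subgroup (a direct product of non-abelian simple groups, which can be neither proper nor a product of more than one factor) shows that $G$ is a non-abelian simple group all of whose proper subgroups are soluble.

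In this situation I would show that each $\mathsf{Sol}_G(x)$ is a maximal subgroup. For $x\neq1$ it is proper (as $x\notin R(G)=1$) and hence soluble; if $\mathsf{Sol}_G(x)<L<G$ with $L$ a subgroup, then $L$ is proper, so soluble, so every $\ell\in L$ gives $\langle x,\ell\rangle\leq L$ soluble and $\ell\in\mathsf{Sol}_G(x)$, forcing $L\subseteq\mathsf{Sol}_G(x)$, a contradiction; so $\mathsf{Sol}_G(x)$ is maximal. The same closure argument shows that whenever $x$ lies in a maximal subgroup $M$ one has $M\subseteq\mathsf{Sol}_G(x)$, whence $M=\mathsf{Sol}_G(x)$. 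Consequently every non-identity element lies in a \emph{unique} maximal subgroup, and distinct maximal subgroups intersect trivially.

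To finish, fix one maximal subgroup $M=\mathsf{Sol}_G(x)$. As $G$ is simple, $M$ is self-normalising, so for $g\notin M$ the conjugate $M^g$ is a maximal subgroup distinct from $M$ and therefore $M\cap M^g=1$. Thus $M$ is a proper non-trivial subgroup meeting each distinct conjugate trivially, i.e. a malnormal subgroup, so it is a Frobenius complement and $G$ is a Frobenius group; but a Frobenius group has a proper non-trivial normal kernel, contradicting the simplicity of $G$. This rules out a minimal counterexample and completes the proof. I expect the main obstacle to be the reduction step rather than the conclusion: solubilizers need not behave well under quotients or restriction, so the crux is the exact preimage identity $\mathsf{Sol}_G(x)=\pi^{-1}(\mathsf{Sol}_{\bar G}(\bar x))$ that lets the hypothesis descend to $G/R$, after which the decisive insight is recognising the trivial-intersection property as malnormality, which converts the combinatorial hypothesis into the Frobenius contradiction and avoids any appeal to the classification of minimal simple groups.
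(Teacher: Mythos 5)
Your proof is correct, and while it opens exactly as the paper does, its endgame is genuinely different. Both arguments share the forward direction, the minimal counterexample, the observation that $\mathsf{Sol}_H(x)=\mathsf{Sol}_G(x)\cap H$ forces every proper subgroup to be soluble, and the identification of $\mathsf{Sol}_G(x)$ as the unique maximal subgroup containing a non-radical element $x$ (so that distinct maximal subgroups meet trivially). At that point the paper concludes that $G/R(G)$ is partitioned by its maximal subgroups and invokes Suzuki's classification of insoluble partitioned groups (insoluble Frobenius groups, $L_2(q)$, ${\rm PGL}_2(q)$, ${\rm Sz}(q)$), finishing with the case-check that none of these is partitioned by its maximal subgroups. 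You instead push the reduction further: the quotient identity (the paper's Lemma \ref{quotient}) lets you assume $R(G)=1$, a minimal normal subgroup argument then makes $G$ non-abelian simple, and simplicity plus trivial intersection of distinct maximal subgroups makes any maximal subgroup $M$ malnormal, i.e.\ $M\cap M^g=1$ for $g\notin M$; Frobenius' theorem then yields a non-trivial proper normal kernel, contradicting simplicity. Your route buys independence from Suzuki's classification and from the paper's unverified assertion about that list of groups, at the cost only of Frobenius' theorem (character theory) --- precisely the tool the paper itself uses to prove Theorem \ref{MI}, so your argument is arguably more in the spirit of the paper's own methods and is more self-contained; the paper's route, by contrast, avoids the simplicity reduction and the Frobenius machinery by handing the final contradiction to a known (but much deeper) classification.
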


It now makes sense to ask what other conditions we can put on the solubilizers that will force the group to be soluble.  We conclude by presenting two conditions on the conjugacy classes that will imply the group is soluble.

\begin{theorem}\label{introfour}
Let $G$ be a group.  Then the following are equivalent:
\begin{enumerate}
\item[{\rm 1.}] $G$ is soluble.
\item[{\rm 2.}]  For each conjugacy class ${\cal C}$ of $G$, the induced subgraph $\Gamma_{\cal S}({\cal C})$ is a clique.
\item[{\rm 3.}]  $\mathsf{Sol}_G(x) \cap {\cal C} \neq \emptyset$ for every element $x\in G$ and every conjugacy class ${\cal C}$ of $G$.
\end{enumerate}
\end{theorem}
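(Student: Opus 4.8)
The three statements become transparent once reformulated. Condition~(2) says precisely that every pair of \emph{conjugate} elements of $G$ generates a soluble subgroup (two elements lie in a common conjugacy class exactly when they are conjugate), while condition~(3) says that the solubilizer $\mathsf{Sol}_G(x)$ meets every conjugacy class, for every $x$. The plan is to prove the two nontrivial implications $(2)\Rightarrow(1)$ and $(3)\Rightarrow(1)$, the reverse implications $(1)\Rightarrow(2)$ and $(1)\Rightarrow(3)$ being immediate: if $G$ is soluble then $\Gamma_{\cal S}(G)$ is complete by Thompson's theorem, so every induced subgraph is a clique and every solubilizer equals $G$.

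For $(2)\Rightarrow(1)$, I would simply recognize condition~(2) as the hypothesis of the conjugacy solubility criterion of Dolfi, Guralnick, Herzog and Praeger: a finite group is soluble provided every pair of conjugate elements of prime power order generates a soluble subgroup. Since (2) grants solubility of $\langle x,y\rangle$ for \emph{all} conjugate pairs, in particular for the primary ones, that theorem yields the solubility of $G$ at once. This is the easy half of the real content.

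The substantive implication is $(3)\Rightarrow(1)$, which I would attack in contrapositive form by a minimal counterexample. Let $G$ be a group of least order that satisfies~(3) but is insoluble. First I would check that~(3) is inherited by quotients: given $\overline{x}$ and a class $\overline{\cal C}$ in $G/N$, lift a class ${\cal C}$ of $G$ mapping into $\overline{\cal C}$ and a preimage $x$ of $\overline{x}$; any element $c\in{\cal C}$ with $\langle x,c\rangle$ soluble projects to $\overline{c}\in\overline{\cal C}$ with $\langle\overline{x},\overline{c}\rangle$ soluble. Minimality then forces every proper quotient of $G$ to be soluble, whence $R(G)=1$ and $G$ has a unique minimal normal subgroup $N$, since two distinct minimal normal subgroups intersect trivially and would embed $G$ into a product of soluble quotients. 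As $R(G)=1$, this $N$ is non-abelian, say $N=T_1\times\cdots\times T_k$ with the $T_i$ isomorphic non-abelian simple groups, and $C_G(N)=1$, so that $G$ embeds into $\mathrm{Aut}(N)$; in other words $G$ is monolithic with non-abelian socle (almost simple when $k=1$).

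It remains to contradict~(3) in this monolithic group, and this is where the main difficulty lies. The goal is to exhibit an element $x$ and a conjugacy class ${\cal C}$ of $G$ that is \emph{solubly isolated} from $x$, meaning $\langle x,c\rangle$ is insoluble for \emph{every} $c\in{\cal C}$, so that $\mathsf{Sol}_G(x)\cap{\cal C}=\emptyset$. The guiding example is $A_5$: taking $x$ a $5$-cycle and ${\cal C}$ the class of $3$-cycles, every subgroup containing an element of order $5$ and one of order $3$ is all of $A_5$, so no member of ${\cal C}$ is soluble-joined to $x$. I would seek the analogous phenomenon in general by choosing $x$ of a suitable prime order inside the socle $N$ together with a class of elements of a second prime order, arranged so that no soluble subgroup can contain $x$ together with any member of that class, and then pushing the choice through the embedding $G\hookrightarrow\mathrm{Aut}(N)$ across the $k$ simple factors. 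Establishing the existence of such a solubly isolated element--class pair in every non-abelian simple group is the crux, and I expect it to rest on the classification of finite simple groups together with known results on generation of simple groups by elements of prescribed prime orders. Producing this witness contradicts the assumed property~(3) of $G$, completing the contrapositive and hence the equivalence of the three conditions.
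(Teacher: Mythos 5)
Your reduction for $(3)\Rightarrow(1)$ is sound as far as it goes (quotient inheritance, minimality forcing proper quotients soluble, $R(G)=1$, a non-abelian minimal normal subgroup $N=T_1\times\cdots\times T_k$), but the proof stops exactly at the point you yourself call ``the crux'': the existence, in every non-abelian simple group, of a solubly isolated element--class pair is asserted as something you ``expect'' to follow from CFSG, and is never proved or even attached to a citable theorem. This is a genuine gap, and it is precisely the hole the paper plugs with the theorem of Abe and Iiyori: for every non-abelian simple group $P$, the soluble Grueneberg--Kegel graph $\Gamma_{\rm s}(P)$ (vertices the primes dividing $|P|$, with $r\approx s$ iff some soluble subgroup of $P$ has order divisible by $rs$) is \emph{not complete}. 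Nonadjacent primes $r,s$ give isolation at the level of element \emph{orders}: any subgroup of $P$ containing elements of orders $r$ and $s$ is insoluble. The paper then takes $u,v\in N$ all of whose components have order $r$ (resp.\ $s$) and notes that for every $g\in G$ the projection of $\langle u,v^g\rangle$ to each factor contains elements of orders $r$ and $s$, hence is insoluble, so $\mathsf{Sol}_G(u)$ misses the class of $v$. Note that the prime-level formulation is not just convenient but necessary for this last step: conjugation by $g$ permutes the factors $T_i$ through possibly outer isomorphisms, so an isolation statement about \emph{specific elements} (your formulation) would need to be invariant under ${\rm Aut}(T_i)$, whereas a statement about orders is automatically so. Without Abe--Iiyori or an equivalent result, your $(3)\Rightarrow(1)$ is not a proof.

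On $(2)\Rightarrow(1)$, your appeal to the Dolfi--Guralnick--Herzog--Praeger solubility criterion is legitimate and does close that implication, but it is a different and much heavier route than the paper's: that criterion rests on the full classification of finite simple groups, whereas the paper reduces a minimal counterexample to a \emph{minimal simple group} (all proper subgroups soluble, simplicity via the same quotient argument you use), invokes Thompson's list $L_2(2^p)$, $L_2(3^p)$, $L_2(p)$, ${\rm Sz}(2^p)$, $L_2(4)$, $L_3(3)$, and in each case exhibits a cyclic Sylow $q$-subgroup $\langle a\rangle$ normalized by every maximal subgroup containing it, so that $\langle a,a^g\rangle=S$ whenever $g$ fails to normalize $\langle a\rangle$, whence the class of $a$ is not a clique. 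So that half of your proposal is correct but black-boxes a stronger theorem; the unproved crux of $(3)\Rightarrow(1)$ remains the decisive defect.
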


\section{Basic Lemmas}

In this section, we state some elementary properties of solubilizers that will be useful later. 

\begin{lm}\label{union}
If $x$ is an element of the group $G$, then we have 
$$\langle x\rangle \subseteq \langle x, Z(G)\rangle \subseteq C_G(x)\subseteq N_G(\langle x\rangle)\subseteq N_G(\langle x\rangle) \cup R(G) \subseteq  \mathsf{Sol}_G(x)=\bigcup_{H} H,$$ 
where the union ranges over all soluble subgroups $H$ of $G$ containing $x$.
\end{lm}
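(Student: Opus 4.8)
The plan is to verify the chain of inclusions one link at a time, reading from left to right, since each link is either immediate from the definitions or follows from a short solubility argument. The first three inclusions are essentially formal. We have $\langle x\rangle\subseteq\langle x,Z(G)\rangle$ because the left-hand side is generated by a subset of the generators of the right-hand side. We have $\langle x,Z(G)\rangle\subseteq C_G(x)$ because $C_G(x)$ is a subgroup containing both $x$ and $Z(G)$; in fact $\langle x,Z(G)\rangle$ is abelian, being generated by pairwise commuting elements, which we will also record for later use. And $C_G(x)\subseteq N_G(\langle x\rangle)$ holds because any $g$ with $gxg^{-1}=x$ satisfies $g\langle x\rangle g^{-1}=\langle x\rangle$. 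The inclusion $N_G(\langle x\rangle)\subseteq N_G(\langle x\rangle)\cup R(G)$ is trivial.

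The only link with any content is $N_G(\langle x\rangle)\cup R(G)\subseteq\mathsf{Sol}_G(x)$, which I would split into two claims. For $g\in N_G(\langle x\rangle)$: since both $x$ and $g$ normalize $\langle x\rangle$, the subgroup $\langle x\rangle$ is normal in $\langle x,g\rangle$, and the quotient $\langle x,g\rangle/\langle x\rangle$ is cyclic, being generated by the image of $g$; hence $\langle x,g\rangle$ is cyclic-by-cyclic, so metabelian and in particular soluble, giving $g\in\mathsf{Sol}_G(x)$. For $g\in R(G)$: using that $R(G)\trianglelefteq G$, the set $\langle x\rangle R(G)$ is a subgroup, and it contains both $x$ and $g$, so $\langle x,g\rangle\le\langle x\rangle R(G)$; this overgroup is soluble because $R(G)$ is a soluble normal subgroup and the quotient $\langle x\rangle R(G)/R(G)$ is cyclic, and since subgroups of soluble groups are soluble we again get $g\in\mathsf{Sol}_G(x)$.

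Finally, for the equality $\mathsf{Sol}_G(x)=\bigcup_H H$ with $H$ ranging over the soluble subgroups of $G$ containing $x$: if $g\in\mathsf{Sol}_G(x)$ then $H=\langle x,g\rangle$ is itself a soluble subgroup containing $x$ and also containing $g$, so $g$ lies in the union; conversely, if $g\in H$ for some soluble $H$ with $x\in H$, then $\langle x,g\rangle\le H$ is soluble, so $g\in\mathsf{Sol}_G(x)$.

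I do not expect a genuine obstacle here. The one place that calls for a little care is the inclusion $R(G)\subseteq\mathsf{Sol}_G(x)$: rather than trying to analyze $\langle x,g\rangle$ directly, one should exploit the normality of $R(G)$ to realize $\langle x,g\rangle$ inside the manifestly soluble subgroup $\langle x\rangle R(G)$. Everything else is bookkeeping with the standard facts that subgroups and (finite) extensions of soluble groups are soluble and that cyclic and abelian groups are soluble.
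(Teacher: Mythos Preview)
Your proof is correct; each link is verified cleanly and the two nontrivial inclusions $N_G(\langle x\rangle)\subseteq\mathsf{Sol}_G(x)$ and $R(G)\subseteq\mathsf{Sol}_G(x)$ are handled exactly as one would expect. The paper itself omits the proof entirely as straightforward, so there is nothing to compare against; your write-up is precisely the kind of routine verification the authors had in mind.
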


We omit the straightforward proof.

\begin{corollary}\label{orderdivisor}
If $x$ is an element of the group $G$, then $|\mathsf{Sol}_G(x)|$ is divisible by $|x|$.
\end{corollary}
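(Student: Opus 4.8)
The plan is to exploit the description of $\mathsf{Sol}_G(x)$ from Lemma~\ref{union} together with a simple coset argument. Since $\langle x\rangle$ is a cyclic, hence soluble, subgroup of $G$ containing $x$, Lemma~\ref{union} gives $\langle x\rangle\subseteq \mathsf{Sol}_G(x)$. The key observation I would record is that $\mathsf{Sol}_G(x)$ is a union of left cosets of $\langle x\rangle$.

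To see this, let $g\in\mathsf{Sol}_G(x)$, so that $\langle x,g\rangle$ is soluble, and let $k\in\langle x\rangle$. I claim $\langle x,kg\rangle=\langle x,g\rangle$. Indeed, since $k\in\langle x\rangle\subseteq\langle x,g\rangle$, we have $kg\in\langle x,g\rangle$, giving $\langle x,kg\rangle\subseteq\langle x,g\rangle$; conversely $g=k^{-1}(kg)\in\langle x,kg\rangle$, so $\langle x,g\rangle\subseteq\langle x,kg\rangle$. Hence $\langle x,kg\rangle=\langle x,g\rangle$ is soluble, and therefore $kg\in\mathsf{Sol}_G(x)$. This shows $\langle x\rangle g\subseteq\mathsf{Sol}_G(x)$ for every $g\in\mathsf{Sol}_G(x)$, so $\mathsf{Sol}_G(x)$ is a disjoint union of left cosets of $\langle x\rangle$.

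From this the conclusion is immediate: each such coset has size $|\langle x\rangle|=|x|$, so $|x|$ divides $|\mathsf{Sol}_G(x)|$. (Equally well one could phrase the argument via the right-multiplication or conjugation action of $\langle x\rangle$ on $\mathsf{Sol}_G(x)$, but the left-coset version is the cleanest since conjugation orbits need not all have full size.) There is no real obstacle here; the only point requiring a moment's thought is the identity $\langle x,kg\rangle=\langle x,g\rangle$ for $k\in\langle x\rangle$, which is what makes $\mathsf{Sol}_G(x)$ genuinely a union of $\langle x\rangle$-cosets rather than merely a set containing $\langle x\rangle$.
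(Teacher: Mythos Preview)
Your argument is correct. It coincides, up to the choice of left versus right multiplication, with the alternate proof the paper records in the Remark immediately following Corollary~\ref{orderdivisor}: the action of $\langle x\rangle$ on $\mathsf{Sol}_G(x)$ by multiplication is semiregular, so the orbits (cosets) partition $\mathsf{Sol}_G(x)$ into blocks of size $|x|$.

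The paper's \emph{primary} proof, however, takes a different route. It invokes the description $\mathsf{Sol}_G(x)=\bigcup_i H_i$ from Lemma~\ref{union}, where the $H_i$ are the soluble subgroups of $G$ containing $x$, and then applies inclusion--exclusion:
\[
|\mathsf{Sol}_G(x)|=\sum_i |H_i|-\sum_{i<j}|H_i\cap H_j|+\sum_{i<j<k}|H_i\cap H_j\cap H_k|-\cdots.
\]
Since every term on the right is the order of a subgroup containing $\langle x\rangle$, each is divisible by $|x|$, and the conclusion follows. Your coset argument is cleaner and avoids inclusion--exclusion entirely; the paper's version has the minor advantage of tying the result directly to the structural description in Lemma~\ref{union}, but otherwise yours (and the Remark's) is the more economical approach.
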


\begin{proof} Use Lemma \ref{union} and note that 
\begin{equation}\label{e1}|\mathsf{Sol}_G(x)|=\left|\cup_{i} H_i\right|=\sum_{i} |H_i|-\sum_{i<j}{|H_i\cap H_j}|+\sum_{i<j<k} |H_i\cap H_j\cap
H_k|-\cdots,\end{equation} where the $H_i$'s are soluble subgroups of $G$ containing $x$. The result now follows from the fact that $|x|$ divides  the right-hand side of (\ref{e1}).
\end{proof}

\begin{remark} 
{\rm An alternate proof proceeds by observing that the action of $\langle x\rangle$ on $\mathsf{Sol}_G(x)$ by right multiplication is semiregular, and so  all orbits have the same size. This shows that $|\mathsf{Sol}_G(x)|$ is divisible by $|\langle x\rangle|=|x|$, as required. }
\end{remark}

\begin{lm}\label{radicaldivisor}{\rm (\cite[Lemma 2.8]{doron})}
Let $N$ be a soluble normal subgroup of a group $G$ and $x \in G$.  Then $|\mathsf{Sol}_G(x)|$ is divisible by $|N|$.  In particular,  $|\mathsf{Sol}_G(x)|$ is divisible by $|R(G)|$.
\end{lm}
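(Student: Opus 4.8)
The plan is to let $N$ act on the set $\mathsf{Sol}_G(x)$ by right multiplication and to check that this action is well-defined and semiregular; once this is done, every orbit has cardinality exactly $|N|$, the orbits partition $\mathsf{Sol}_G(x)$, and hence $|N|$ divides $|\mathsf{Sol}_G(x)|$. The ``in particular'' clause is then immediate, since $R(G)$ is by definition a soluble normal subgroup of $G$. This mirrors the semiregularity argument in the Remark after Corollary \ref{orderdivisor}, with $\langle x\rangle$ replaced by $N$.

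First I would establish the only non-trivial point, namely that $\mathsf{Sol}_G(x)$ is invariant under right multiplication by $N$, i.e. $gn \in \mathsf{Sol}_G(x)$ whenever $g \in \mathsf{Sol}_G(x)$ and $n \in N$. Fix such $g$ and $n$. By definition $\langle x, g\rangle$ is soluble, and since $N \trianglelefteq G$ the set $\langle x, g\rangle N$ is a subgroup of $G$; moreover it is soluble, being an extension of the soluble group $N$ by $\langle x, g\rangle N / N \cong \langle x, g\rangle / (\langle x, g\rangle \cap N)$, which is a homomorphic image of the soluble group $\langle x, g\rangle$ and therefore soluble. Now $x, gn \in \langle x, g\rangle N$, so $\langle x, gn\rangle$ is a subgroup of the soluble group $\langle x, g\rangle N$ and hence soluble; that is, $gn \in \mathsf{Sol}_G(x)$, as claimed. (Taking $g = x$ here recovers the inclusion $N \subseteq \mathsf{Sol}_G(x)$.)

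Finally, the action is semiregular: if $gn = g$ for some $g \in \mathsf{Sol}_G(x)$ and $n \in N$, then $n = 1$, so every point stabiliser is trivial and every orbit has size $|N|$, giving $|N| \mid |\mathsf{Sol}_G(x)|$. The only real obstacle is the invariance claim $\mathsf{Sol}_G(x)\,N = \mathsf{Sol}_G(x)$, and even that is routine once one uses that $N$ is normal (so that $\langle x, g\rangle N$ is actually a subgroup) together with the fact that solubility passes to subgroups and quotients. An alternative, completely parallel route uses the inclusion--exclusion formula (\ref{e1}) from the proof of Corollary \ref{orderdivisor}: replacing each soluble subgroup $H \ni x$ by the (still soluble, by the same reasoning) subgroup $HN \ni x$ shows $\mathsf{Sol}_G(x) = \bigcup_i H_i N$, and every intersection $H_{i_1}N \cap \cdots \cap H_{i_k}N$ occurring in (\ref{e1}) is a subgroup containing $N$, hence has order divisible by $|N|$; so $|N|$ divides $|\mathsf{Sol}_G(x)|$.
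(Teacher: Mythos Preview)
Your proof is correct and follows essentially the same approach as the paper: let $N$ act on $\mathsf{Sol}_G(x)$ by right multiplication, verify invariance via $\langle x, gn\rangle \leqslant \langle x, g\rangle N$ (soluble because both $N$ and $\langle x, g\rangle$ are), and conclude by semiregularity. The inclusion--exclusion alternative you sketch is a harmless extra observation, but the main argument matches the paper's proof.
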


\begin{proof} 
Note that  $N$ acts on $\mathsf{Sol}_G(x)$  by right multiplication.
We remark that if $y\in N$ and $g\in \mathsf{Sol}_G(x)$, then  $\langle gy, x\rangle\leqslant N  \langle g, x
\rangle$. As $N$ and $ \langle g, x\rangle$ both are soluble,
$N  \langle g, x\rangle$ is soluble. It follows that $\langle gy, x\rangle$ is also soluble and hence $gy\in \mathsf{Sol}_G(x)$.  It is now easy to check that this action is semiregular, and so  
$|N|$ divides   $|\mathsf{Sol}_G(x)|$.  
\end{proof}

Let $N$ be a soluble normal subgroup of $G$ and $x \in G\setminus N$. We remark that $N\leqslant R(G)\subseteq  \mathsf{Sol}_G(x)$. 
Put $$\frac{\mathsf{Sol}_G(x)}{N}:=\{yN\ |\ y\in  \mathsf{Sol}_G(x) \}=\{yN \ | \ \langle y, x\rangle \ \mbox{is soluble}\}.$$ 
 Note that $ \mathsf{Sol}_G(x)$ is not necessarily a subgroup of $G$.
We claim that $$\left|\frac{\mathsf{Sol}_G(x)}{N}\right|=\frac{|\mathsf{Sol}_G(x)|}{|N|}.$$ To prove this,
let $N$ act on $\mathsf{Sol}_G(x)$ by right multiplication. Note that if $n\in N$ and $y\in  \mathsf{Sol}_G(x)$, then 
$yn\in  \mathsf{Sol}_G(x)$. Indeed, we have 
$\langle yn, x\rangle\leqslant \langle y, x\rangle N$
which is a soluble subgroup of $G$, and thus $\langle yn, x\rangle$ is also soluble. Hence $yn\in  \mathsf{Sol}_G(x)$.
Therefore $N$ permutes $\mathsf{Sol}_G(x)$ and partitions this set into orbits $yN$ with $y\in  \mathsf{Sol}_G(x)$. Clearly, $N$ has exactly 
 $|\mathsf{Sol}_G(x)/N|$ orbits on $\mathsf{Sol}_G(x)$, and the claim follows.

The following lemma is taken from \cite{doron}.

\begin{lm}\label{quotient} 
Let $N$ be a soluble normal subgroup of a finite group $G$, and assume that $x \in G$.  Then we have  $\mathsf {Sol}_{G/N} (xN) = \mathsf {Sol}_G (x)/N$. 
\end{lm}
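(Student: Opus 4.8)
The plan is to prove the two inclusions separately, using only two standard closure properties of the class of soluble groups: (i) every homomorphic image of a soluble group is soluble, and (ii) if a group $H$ has a normal subgroup $M$ with both $M$ and $H/M$ soluble, then $H$ is soluble. Throughout, let $\pi\colon G\to G/N$ be the canonical projection, and recall the elementary facts that $\pi(\langle x,y\rangle)=\langle xN,yN\rangle$ and $\pi^{-1}(\langle xN,yN\rangle)=\langle x,y\rangle N$, so that $\langle xN,yN\rangle\cong \langle x,y\rangle N/N$. These identifications are the only slightly technical ingredient, and once they are in place the argument is short.

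First I would establish $\mathsf{Sol}_G(x)/N\subseteq \mathsf{Sol}_{G/N}(xN)$. Take any $y\in \mathsf{Sol}_G(x)$, so that $\langle x,y\rangle$ is soluble. Then $\langle xN,yN\rangle=\pi(\langle x,y\rangle)$ is a homomorphic image of a soluble group, hence soluble by (i); therefore $yN\in \mathsf{Sol}_{G/N}(xN)$. This shows every coset in $\mathsf{Sol}_G(x)/N$ lies in $\mathsf{Sol}_{G/N}(xN)$.

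Conversely, for $\mathsf{Sol}_{G/N}(xN)\subseteq \mathsf{Sol}_G(x)/N$, suppose $yN\in \mathsf{Sol}_{G/N}(xN)$, i.e.\ $\langle xN,yN\rangle$ is soluble. Since $\langle xN,yN\rangle\cong \langle x,y\rangle N/N$ and $N$ is a soluble normal subgroup of $\langle x,y\rangle N$, fact (ii) yields that $\langle x,y\rangle N$ is soluble, and hence so is its subgroup $\langle x,y\rangle$. Thus $y\in \mathsf{Sol}_G(x)$, so $yN\in \mathsf{Sol}_G(x)/N$. Here one invokes the observation recorded just before the lemma, namely that $yn\in \mathsf{Sol}_G(x)$ whenever $y\in \mathsf{Sol}_G(x)$ and $n\in N$, which guarantees that $\mathsf{Sol}_G(x)$ is a union of cosets of $N$ and that the notation $\mathsf{Sol}_G(x)/N$ is unambiguous (independent of coset representatives).

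I do not anticipate a genuine obstacle: the proof is essentially a bookkeeping exercise around the correspondence $\langle xN,yN\rangle=\langle x,y\rangle N/N$ together with the behaviour of solubility under quotients and extensions. The only point requiring a little care is to keep the two descriptions of $\mathsf{Sol}_G(x)/N$ — as $\{yN : y\in\mathsf{Sol}_G(x)\}$ and as a set of cosets of $N$ inside $G/N$ — consistent, which is exactly what the pre-lemma remark secures.
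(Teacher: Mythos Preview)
Your argument is correct and is essentially the same as the paper's: both hinge on the identification $\langle xN,yN\rangle=\langle x,y\rangle N/N$ to conclude that $\langle x,y\rangle$ is soluble if and only if $\langle xN,yN\rangle$ is soluble. The paper simply states this equivalence in one line, whereas you have spelled out the two inclusions and the role of the pre-lemma remark more explicitly.
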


\begin{proof} 
We make use of the following fact:
$$\langle xN, yN\rangle=\frac{\langle x, y\rangle N}{N} \cong \frac{\langle x, y\rangle}{\langle x, y\rangle \cap N},$$
from which it follows that $\langle x, y\rangle$ is soluble if and only if $\langle xN, yN\rangle$ is soluble.
\end{proof}

\section{The Solubilizer}

We begin this section by proving the following theorem which is useful as well as interesting.

\begin{theorem}\label{MI}
Let $A$ be an abelian subgroup of a group $G$. If $A$ is maximal among soluble subgroups of $G$, then $A=G$.  In particular, if the elements of the solubilizer of some element in $G$ commute pairwise, then $G$ is abelian.
\end{theorem}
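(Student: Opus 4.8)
The plan is to argue by contradiction: suppose $A$ is abelian and maximal among soluble subgroups of $G$, but $A \neq G$. The strategy is to produce a soluble subgroup strictly containing $A$, contradicting maximality. First I would observe that $A$ is self-normalizing: if $g \in N_G(A)$, then $\langle A, g\rangle$ normalizes $A$, so $A\langle g\rangle$ is soluble (it has the normal soluble subgroup $A$ with cyclic quotient), and by maximality $g \in A$; hence $N_G(A) = A$. The key point to exploit next is that $A$ is \emph{abelian} and self-normalizing, which is a very restrictive combination — by a theorem on self-normalizing abelian subgroups (or by the theory of Carter-like subgroups), such an $A$ cannot exist in a non-soluble group without forcing extra structure; but rather than invoke heavy machinery, I expect the intended route is more elementary and goes through the solubilizer.

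Here is the elementary route I would actually pursue. Pick any $x \in A$ with $x \neq 1$ (if $A = 1$ then $G = 1$ since every element generates a soluble, hence the maximality forces $G=A$ — handle this degenerate case separately). Since $A$ is soluble and contains $x$, Lemma~\ref{union} gives $A \subseteq \mathsf{Sol}_G(x)$, and in fact $\mathsf{Sol}_G(x) = \bigcup_H H$ over soluble $H \ni x$. Now take any $y \in \mathsf{Sol}_G(x)$: then $\langle x, y\rangle$ is soluble. The crucial claim is that $y$ centralizes $A$, which would give $\langle A, y\rangle \leqslant C_G(?)$... — more precisely, I want to show every element of $\mathsf{Sol}_G(x)$ lies in $A$, forcing $\mathsf{Sol}_G(x) = A$. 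To get this, I would choose $x$ more carefully: let $x$ be an element of $A$ such that $C_G(x) \cap A' $ ... actually the clean approach is to pick $x \in A$ whose centralizer in $G$ is as small as possible, or better, to use that $A = C_G(A)$ (true since $A$ is abelian and self-normalizing: $C_G(A) \leqslant N_G(A) = A \leqslant C_G(A)$). Then for $a \in A$, $\mathsf{Sol}_G(a) \supseteq A$; if some $\mathsf{Sol}_G(a)$ with $a\neq 1$ equalled $A$, the pairwise-commuting hypothesis would be vacuously inapplicable, so instead I show directly: for a suitable $a \in A\setminus\{1\}$ and any $y \in \mathsf{Sol}_G(a)$, the soluble subgroup $\langle a, y\rangle$ together with $A$ generates a soluble subgroup — this is where the obstacle lies.

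The main obstacle is precisely showing that $\langle A, \langle x,y\rangle\rangle$ is soluble for $y \in \mathsf{Sol}_G(x)$; this is false in general (that is exactly why $\mathsf{Sol}_G(x)$ need not be a subgroup), so the argument must instead extend $A$ by a single well-chosen element. I would fix this by the following: since $A \neq G$ and $A$ is maximal soluble, there is a prime $p \mid |G|$ with a Sylow $p$-subgroup $P$ of $G$ not contained in $A$; but actually the cleanest fix is to use that $A$, being maximal soluble, contains a Sylow $p$-subgroup of $G$ for \emph{some} prime $p$ (in fact $A$ contains some Sylow subgroup of $G$, since a maximal soluble subgroup contains a full Sylow $p$-subgroup of $G$ for every prime $p$ dividing $|A|$ — one can enlarge $A\cap P$ inside $N_G(A\cap P)$ which is soluble when $A\cap P$ is, and conclude $A \cap P \in \mathrm{Syl}_p(G)$). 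Taking $x$ a nontrivial element of such a central Sylow piece $Z(P) \cap A$, every soluble $H \ni x$ then has a normalizer-based argument forcing $H \leqslant A$ via $N_G(\text{Sylow}) $ considerations, giving $\mathsf{Sol}_G(x) = A$, hence $A$ is a subgroup equal to a solubilizer. Finally, for the "in particular" clause: if the elements of $\mathsf{Sol}_G(x)$ commute pairwise then $\mathsf{Sol}_G(x)$ is closed under products and inverses, so it is an abelian subgroup $A$ containing $x$ which is then maximal soluble (any soluble $H$ with $A \leqslant H$ has $H \subseteq \mathsf{Sol}_G(x) = A$ by Lemma~\ref{union}), whence $A = G$ by the first part and $G$ is abelian.
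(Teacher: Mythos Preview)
Your treatment of the ``in particular'' clause is fine and matches the paper: once the elements of $\mathsf{Sol}_G(x)$ commute pairwise, $\mathsf{Sol}_G(x)$ is an abelian subgroup, and by Lemma~\ref{union} any soluble $H\supseteq\mathsf{Sol}_G(x)$ lies inside $\mathsf{Sol}_G(x)$, so $\mathsf{Sol}_G(x)$ is maximal soluble and the first assertion applies. You also correctly obtain $N_G(A)=A$.

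The main assertion, however, is not proved. Your proposed route hinges on the claim that a maximal soluble subgroup $A$ contains a full Sylow $p$-subgroup of $G$ for every prime $p\mid |A|$, justified by ``enlarge $A\cap P$ inside $N_G(A\cap P)$, which is soluble''. That justification is false: $N_G(A\cap P)$ need not be soluble (in the extreme case $A\cap P=1$ it is all of $G$). The claim itself is false too: $D_{10}$ is a maximal (hence maximal soluble) subgroup of $A_5$, yet its Sylow $2$-subgroup has order $2$, not $4$. Even granting the weaker statement that $A$ contains \emph{some} Sylow $p$-subgroup $P$ of $G$, the next step---that any soluble $H\ni x$ with $x\in Z(P)$ is forced into $A$ ``via $N_G(\text{Sylow})$ considerations''---does not follow: $H$ need only contain $x$, not $P$, so there is no reason $H\leqslant N_G(P)$ or $H\leqslant A$. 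In short, the solubilizer/Sylow line you sketch does not close.

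The paper's argument is quite different and worth knowing. By induction on $|G|$ one reduces to the case where $A$ is a maximal subgroup of $G$ and $A$ contains no nontrivial normal subgroup of $G$; in particular $N_G(A)=A$. For $g\notin A$ one has $\langle A,A^g\rangle=G$, and since $A$ and $A^g$ are abelian, $A\cap A^g\leqslant Z(G)\trianglelefteq G$, forcing $A\cap A^g=1$. Thus $A$ is a self-normalizing TI-subgroup, and Frobenius' theorem yields a normal complement $X$ with $G=XA$ a Frobenius group; Thompson's theorem makes $X$ nilpotent, so $G$ is soluble and $A=G$. The key idea you are missing is this reduction to a Frobenius configuration via the TI property of an abelian self-normalizing maximal subgroup.
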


\begin{proof} 
We use induction on $|G|$.  We may assume that $A$ is a maximal subgroup of $G$, in fact, if there exists a subgroup $H$ of $G$ such that $A\leqslant H<G$, then the inductive hypothesis yields that $A=H$.

On the other hand, if there exists a normal subgroup $N$ of $G$ with $1<N\leqslant A$, then we may apply the inductive hypothesis to $G/N$ with respect to $A/N$ to conclude that  $A/N=G/N$, from which it follows that $A=G$. We may therefore assume that such a normal subgroup $N$ of $G$ does not exist. In particular, $A$ is not normal in $G$. Thus $N_G(A)=A$ by the maximality of $A$.  This implies that for all elements $g \in G\setminus A$, $A^g\neq A$. We claim that $A\cap A^g=1$ for all elements $g\in G\setminus A$. Consider the subgroup $\langle A, A^g\rangle$ of $G$. Since $A<\langle A, A^g\rangle$, so $\langle A, A^g\rangle=G$. It is obvious that $A\cap A^g\leqslant Z(G)$ and thus $A\cap A^g$ is normal in $A$ which forces that $A\cap A^g=1$, as claimed.

Let $X$ be the subset of $G$ consisting of those elements that are not conjugate in $G$ to any nonidentity element of $A$.  By Lemma 6.5 in \cite{Isaacs}, $|G|=|X||A|$. Also, by Frobenius' theorem,  the set $X$ is a normal subgroup of $G$, and by the definition of $X$, we see that $X \cap A=1$.  Therefore, $G=AX$, and thus, $A$ complements the normal subgroup $X$ in $G$. Hence, we can easily conclude that $G$ is a Frobenius group with kernel $X$, which is nilpotent by Thompson's theorem. Finally, $G$ is soluble, and hence $A=G$.

Now let $x\in G$ and the elements of $\mathsf {Sol}_G (x)$ commute pairwise. We claim that the solubilizer $\mathsf {Sol}_G (x)$ forms an abelian subgroup of $G$.  To see this, observe that $\langle x \rangle \subseteq \mathsf{Sol}_G(x)$, and so $\mathsf {Sol}_G (x)$ is non-empty. Furthermore, if $y, z\in \mathsf{Sol}_G(x)$, then $x$, $y$, and $z$ commute pairwise. Thus $\langle x, yz \rangle$  is an abelian subgroup of $G$, which is soluble, and so $yz \in \mathsf{Sol}_G(x)$. This shows that $\mathsf{Sol}_G(x)$ is an abelian subgroup of $G$, as claimed. We now show that  $\mathsf{Sol}_G(x)$  is maximal among all soluble subgroups of $G$. Suppose $H$ is a soluble subgroup of $G$ which contains $\mathsf{Sol}_G(x)$ properly. We may choose $y\in H\setminus \mathsf{Sol}_G(x)$ and consider the subgroup $\langle y, x\rangle$. But then $\langle y, x\rangle$ as a subgroup of $H$ is soluble, and this forces $y\in \mathsf{Sol}_G(x)$, which is a contradiction.  The result now follows by the first part of theorem. 
\end{proof}

As an immediate consequence of Corollary \ref{orderdivisor} and Theorem \ref{MI}, we have the following:

\begin{corollary}\label{xsol}
Let $G$ be an insoluble group, and assume that $x\in G$. Then $\langle x\rangle$ is properly contained in $\mathsf{Sol}_G(x)$. In particular,  
\begin{itemize}
\item [{\rm (1)}]  
There is a soluble subgroup $H$ of $G$ which contains $\langle x\rangle$ properly.
\item [{\rm (2)}] 
$|\mathsf{Sol}_G(x)|$ cannot be a prime number.
\end{itemize}
\end{corollary}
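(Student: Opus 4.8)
The plan is to get the proper containment $\langle x\rangle\subsetneq\mathsf{Sol}_G(x)$ by contradiction, and then read off (1) and (2) immediately. Since $\langle x\rangle\subseteq\mathsf{Sol}_G(x)$ always holds (Lemma \ref{union}), the only alternative to proper containment is $\mathsf{Sol}_G(x)=\langle x\rangle$. In that case $\mathsf{Sol}_G(x)$ is cyclic, so its elements commute pairwise, and the "in particular" clause of Theorem \ref{MI} forces $G$ to be abelian, hence soluble — contradicting the hypothesis. (The degenerate case $x=1$ causes no trouble: then $\mathsf{Sol}_G(1)=G\neq 1=\langle 1\rangle$ since $G$ is insoluble.) This establishes the first assertion.

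For (1), use Lemma \ref{union} to write $\mathsf{Sol}_G(x)=\bigcup_H H$ over the soluble subgroups $H$ of $G$ containing $x$. Since this union strictly contains $\langle x\rangle$, pick $g\in\mathsf{Sol}_G(x)\setminus\langle x\rangle$; then $H:=\langle x,g\rangle$ is a soluble subgroup of $G$ that contains $\langle x\rangle$ properly. For (2), Corollary \ref{orderdivisor} gives $|x|\bigm|\,|\mathsf{Sol}_G(x)|$, while the proper containment just proved gives $|x|=|\langle x\rangle|<|\mathsf{Sol}_G(x)|$. If $|\mathsf{Sol}_G(x)|$ were a prime $p$, then $|x|\mid p$ together with $|x|<p$ would force $|x|=1$, i.e.\ $x=1$; but then $\mathsf{Sol}_G(x)=G$ would have order $p$, making $G$ cyclic, again contradicting insolubility. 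Hence $|\mathsf{Sol}_G(x)|$ is not prime.

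There is no real obstacle here — all the work lives in Theorem \ref{MI} and Corollary \ref{orderdivisor}. The only points requiring a little care are to invoke Theorem \ref{MI} through its pairwise-commuting formulation (a cyclic subgroup satisfies this trivially), rather than attempting to argue directly that $\langle x\rangle$ is maximal among soluble subgroups, and to dispatch the $x=1$ corner case when ruling out $|\mathsf{Sol}_G(x)|$ prime in part (2).
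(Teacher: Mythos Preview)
Your proof is correct and follows essentially the same approach as the paper: both invoke Theorem~\ref{MI} (via the pairwise-commuting formulation) to rule out $\mathsf{Sol}_G(x)=\langle x\rangle$, then pick $g\in\mathsf{Sol}_G(x)\setminus\langle x\rangle$ for part~(1), and combine the proper containment with Corollary~\ref{orderdivisor} for part~(2). Your treatment of~(2) is in fact slightly more careful than the paper's --- you explicitly dispatch the $x=1$ case, whereas the paper's one-line deduction ``$|\mathsf{Sol}_G(x)|$ prime implies $\mathsf{Sol}_G(x)=\langle x\rangle$'' tacitly assumes $x\neq 1$.
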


\begin{proof}   
Note that in light of Theorem \ref{MI} the solubilizer $\mathsf {Sol}_G (x)$ cannot admit the structure of an abelian group.  Hence, we must have $\langle x \rangle$ to be a proper subset of $\mathsf {Sol}_G (x)$.  Thus, there is an element $y \in \mathsf {Sol}_G (x) \setminus \langle x \rangle$.  The subgroup $\langle x, y \rangle$ will be soluble and properly contain $\langle x \rangle$.  By Corollary \ref{orderdivisor}, we know that $|x|$ divides $|\mathsf{Sol}_G(x)|$, and so, $|\mathsf{Sol}_G(x)|$ a prime number implies that $\mathsf{Sol}_G(x) = \langle x \rangle$, and we have seen that this is a contradiction.
\end{proof}

We next show that $|\mathsf{Sol}_G(x)|$  cannot be a square of a prime number under the additional hypothesis that $R (G) \ne 1$.  It is not clear that this additional hypothesis is really needed to obtain this conclusion.

\begin{corollary}\label{radical}
Let $G$ be an insoluble group such that $R(G) \ne 1$.  If $x \in G$, then $|\mathsf{Sol}_G(x)|$ cannot be a square of a prime number.
\end{corollary}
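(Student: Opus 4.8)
The plan is to argue by contradiction. Suppose $|\mathsf{Sol}_G(x)| = p^2$ for some prime $p$; I will derive a contradiction from the divisibility facts already established together with the quotient behaviour of solubilizers.

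The first move is to dispose of the case $x \in R(G)$: then $\mathsf{Sol}_G(x) = G$, so $|G| = p^2$, which makes $G$ abelian and in particular soluble, contrary to hypothesis. So from now on $x \notin R(G)$. Next, Lemma \ref{radicaldivisor} gives that $|R(G)|$ divides $|\mathsf{Sol}_G(x)| = p^2$, and since $R(G) \neq 1$ by assumption we are left with $|R(G)| = p^2$ or $|R(G)| = p$. The first possibility is quickly ruled out: by Lemma \ref{union} we have $R(G) \subseteq \mathsf{Sol}_G(x)$, so if $|R(G)| = p^2 = |\mathsf{Sol}_G(x)|$ then $R(G) = \mathsf{Sol}_G(x)$, forcing $x \in R(G)$ --- the case we have already excluded. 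Hence $|R(G)| = p$.

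To finish, I would pass to the quotient $\bar G := G/R(G)$. Because $R(G)$ is the soluble radical, $\bar G$ is insoluble, and $\bar x := xR(G) \neq \bar 1$ since $x \notin R(G)$. Combining Lemma \ref{quotient} with the cardinality identity $|\mathsf{Sol}_G(x)/R(G)| = |\mathsf{Sol}_G(x)|/|R(G)|$ established in the discussion just before that lemma, one computes $|\mathsf{Sol}_{\bar G}(\bar x)| = |\mathsf{Sol}_G(x)/R(G)| = p^2/p = p$. But Corollary \ref{xsol}(2), applied to the insoluble group $\bar G$ and the element $\bar x$, asserts that the order of a solubilizer in $\bar G$ can never be a prime. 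This contradiction completes the proof.

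The argument is short and each individual step is routine, so I do not anticipate a genuine obstacle. The only points that need a moment of care are (i) checking that the reduction to $\bar G$ is legitimate, which amounts exactly to the defining property of $R(G)$ (that $G/R(G)$ has no nontrivial soluble normal subgroup, hence is insoluble since $G$ is), and (ii) invoking the semiregular-action computation recorded before Lemma \ref{quotient} to guarantee $|\mathsf{Sol}_G(x)/R(G)| = |\mathsf{Sol}_G(x)|/|R(G)|$. It is cleanest to peel off the case $x \in R(G)$ at the very start so that $\bar x \neq \bar 1$ is automatic and one may cite Corollary \ref{xsol} directly.
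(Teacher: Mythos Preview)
Your argument is correct. Both proofs first reduce to the situation $x\notin R(G)$ and $|R(G)|=p$, but the endgames differ. The paper stays inside $G$: it observes that $R(G)\langle x\rangle$ is a soluble subgroup of order $p^2$ containing $x$, hence must equal $\mathsf{Sol}_G(x)$; but a group of order $p^2$ is abelian, contradicting Theorem~\ref{MI}. You instead pass to the quotient $\bar G=G/R(G)$ and invoke Lemma~\ref{quotient} together with the cardinality computation preceding it to obtain $|\mathsf{Sol}_{\bar G}(\bar x)|=p$, which contradicts Corollary~\ref{xsol}(2). Your route has the conceptual advantage of exhibiting the result as a formal reduction to the prime case already handled, while the paper's route is slightly more self-contained in that it avoids the quotient machinery and appeals directly to Theorem~\ref{MI}. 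Either way the dependence on Theorem~\ref{MI} is present, since Corollary~\ref{xsol} rests on it.
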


\begin{proof}  
Let $|\mathsf{Sol}_G(x)| = p^2$, where $p$ is a prime number.  First, since every group of order $p^2$ is abelian, the solubilizer $\mathsf{Sol}_G(x)$ does not admit a group structure by Theorem \ref{MI}.  It follows by Corollary \ref{orderdivisor} that the cyclic group $\langle x\rangle$ is of order $p$.  Similarly, by Lemma \ref{radicaldivisor}, we conclude that $|R(G)|=p$.  Clearly, $x$ does not lie in $R(G)$, and so $\langle x \rangle \cap R(G) = 1$. But then $R(G) \langle x \rangle$ is a group of order $p^2$, which forces $\mathsf {Sol}_G (x) = R(G) \langle x\rangle$, a contradiction. 
\end{proof} 

We now prove Theorem \ref{introtwo} from the Introduction which we restate here.  Recall that a group $G$ is {\it partitioned} if there exist proper, nontrivial subgroups $H_1, \dots, H_m$ so that $G = \cup_{i=1}^m H_i$ and $H_i \cap H_j = 1$ when $i \ne j$ for $1 \leqslant i,j \leqslant m$.  We will make use of Suzuki's classification of partitioned insoluble groups.

\begin{theorem}  
Let $G$ be a group. Then $G$ is soluble if and only if $\mathsf{Sol}_G(x)$ is a subgroup of $G$ for every element $x\in G$.
\end{theorem}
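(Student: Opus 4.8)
The plan is to prove the nontrivial implication by taking a minimal counterexample, showing it must be an insoluble partitioned group, invoking Suzuki's classification, and ruling out each family by hand.

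If $G$ is soluble then $\langle x,g\rangle$ is soluble for all $g\in G$, so $\mathsf{Sol}_G(x)=G$ is a subgroup for every $x$. Conversely, suppose the implication fails and let $G$ be insoluble of least order with $\mathsf{Sol}_G(x)$ a subgroup for all $x\in G$. The key observation is that this property passes to proper solubilizers: if $x\notin R(G)$ then $S:=\mathsf{Sol}_G(x)$ is a proper subgroup, and for $y\in S$ we have $\mathsf{Sol}_S(y)=S\cap\mathsf{Sol}_G(y)$, an intersection of two subgroups of $G$ and hence a subgroup of $S$; minimality of $G$ then forces $S$ to be soluble. By Lemma~\ref{union}, $\mathsf{Sol}_G(x)$ contains every soluble subgroup of $G$ through $x$, so it is the \emph{unique} maximal soluble subgroup of $G$ containing $x$.

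Next I would show $R(G)=1$. Otherwise $G/R(G)$ is insoluble, and by Lemma~\ref{quotient} one has $\mathsf{Sol}_{G/R(G)}(xR(G))=\mathsf{Sol}_G(x)/R(G)$, which is a subgroup of $G/R(G)$ since $\mathsf{Sol}_G(x)$ is a subgroup of $G$ containing $R(G)$; minimality makes $G/R(G)$ soluble, so $G$ is soluble, a contradiction. With $R(G)=1$, the solubilizers are pairwise trivial-intersection: if $1\neq z\in\mathsf{Sol}_G(x)\cap\mathsf{Sol}_G(y)$, then both soluble subgroups lie in the soluble subgroup $\mathsf{Sol}_G(z)$, so $\langle\mathsf{Sol}_G(x),\mathsf{Sol}_G(y)\rangle$ is soluble and contains $x$, hence is contained in $\mathsf{Sol}_G(x)$ and, symmetrically, in $\mathsf{Sol}_G(y)$, so the two are equal. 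Since each nonidentity element of $G$ lies in its own solubilizer, and each $\mathsf{Sol}_G(x)$ with $x\neq 1$ is a nontrivial soluble subgroup which is proper because $x\notin R(G)$, the distinct solubilizers give a nontrivial partition of $G$ into soluble subgroups, of which there are at least two since no single $\mathsf{Sol}_G(x)$ equals $G$. Thus $G$ is a partitioned group.

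By Suzuki's classification of insoluble partitioned groups, $G\cong\mathrm{PSL}(2,q)$ with $q\geq 4$, $G\cong\mathrm{PGL}(2,q)$ with $q\geq 5$ odd, or $G\cong\mathrm{Sz}(q)$ with $q=2^{2n+1}\geq 8$. In each case I would produce an involution $t$ contained in two distinct maximal soluble subgroups $M_1\neq M_2$ of $G$. For the linear groups one can take the normalizers of two non-conjugate maximal tori, or a torus normalizer together with a copy of $S_4$ or $A_4$; for $\mathrm{Sz}(q)$ one can take the Frobenius maximal subgroup $N_G(P)$ of order $q^2(q-1)$ together with a dihedral subgroup $D_{2(q-1)}$, using that an involution lies in a unique Sylow $2$-subgroup but in many such dihedral subgroups. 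Then $M_1\cup M_2\subseteq\mathsf{Sol}_G(t)$ while $\langle M_1,M_2\rangle=G$ by maximality, so $\mathsf{Sol}_G(t)=G$, forcing $t\in R(G)=1$, a contradiction. The step I expect to be the main obstacle is exactly this last one: carrying out the verification across all three families and in particular the finitely many small cases ($\mathrm{PSL}(2,4)\cong\mathrm{PSL}(2,5)\cong A_5$, $\mathrm{PSL}(2,7)$, $\mathrm{PSL}(2,9)$, $\mathrm{PGL}(2,5)\cong S_5$, and so on), using Dickson's description of the subgroups of $\mathrm{PSL}(2,q)$ and $\mathrm{PGL}(2,q)$ and Suzuki's of $\mathrm{Sz}(q)$, while making sure the two maximal subgroups chosen are soluble and genuinely distinct.
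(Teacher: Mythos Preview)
Your argument is correct and follows essentially the same route as the paper's: take a minimal counterexample, show the solubilisers give a nontrivial partition, invoke Suzuki's classification, and rule out each family. Your preliminary reduction to $R(G)=1$ via Lemma~\ref{quotient} is a neat touch that the paper does not make---it is what legitimises your omission of insoluble Frobenius groups from Suzuki's list (their nilpotent kernel forces $R(G)\neq 1$), whereas the paper works in $G/R(G)$ and must mention that case; conversely, the paper observes directly that every proper subgroup is soluble (your inductive argument for $S=\mathsf{Sol}_G(x)$ works verbatim for any proper $H$), so that ``maximal soluble'' coincides with ``maximal'' and the final case check reduces to the bare assertion that none of the listed groups is partitioned by its maximal subgroups.
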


\begin{proof}  
If $G$ is soluble, then $\mathsf{Sol}_G(x) = G$ for every $x\in G$ and so $\mathsf{Sol}_G(x)$ is a subgroup of $G$ for every element $x\in G$. 

Conversely, assume that $\mathsf{Sol}_G(x)$ is a subgroup of $G$ for every element $x\in G$. We work by induction on $|G|$. Notice that if $R(G)=G$, then $G$ is soluble and there is nothing to prove. Assume then that $R(G)<G$.  If $H$ is a subgroup of $G$, then $\mathsf{Sol}_H(x) = \mathsf{Sol}_G(x)\cap H$ will be a subgroup of $H$ for every element $x\in  H$. If $H$ is proper in $G$, then we may apply the inductive hypothesis to see that $H$ is soluble. Thus, every proper subgroup of $G$ is soluble.  
 
Since $R(G)<G$, the quotient group $G/R(G)$ is an insoluble group.  We claim that if $x\in G \setminus  R(G)$, then $x$ lies in a unique maximal subgroup of $G$. Since $x\notin R(G)$, we know that $x\in  \mathsf{Sol}_G(x) < G$. Thus, $x$ is contained in a maximal subgroup of $G$, say $M$. Since $M<G$, we know that $M$ is soluble. If $y\in M$, then $\langle x, y\rangle\leqslant M$ and so, $\langle x, y\rangle$ is soluble. This implies that $y\in \mathsf{Sol}_G(x)$. Thus, $M \leqslant  \mathsf{Sol}_G(x)$. Since $\mathsf{Sol}_G(x)$ is a subgroup and $\mathsf{Sol}_G(x) < G$, we see that $\mathsf{Sol}_G(x)=M$. We note that as $M$ was arbitrary, this implies that $\mathsf{Sol}_G(x)$ is the unique maximal subgroup of $G$ containing $x$. It follows that $G/R(G)$ is partitioned by its maximal subgroups. 

Suzuki has classified the insoluble groups that are partitioned (\cite{Suzuki}). In particular, the possible groups are insoluble Frobenius groups, $L_2(q)$, where $q$ is a prime power greater than $3$, ${\rm PGL}_2(q)$ where $q$ is an odd prime power greater than $3$, and ${\rm Sz}(q)$, where $q = 2^{2f+1}$ for some integer $f \geqslant  1$. It is not difficult to see that these groups are not partitioned by their maximal subgroups. This yields a contradiction, and so, we must have $G$ is soluble.  
\end{proof}

To prove the next result, we make use of minimal simple groups.  A {\em minimal simple group} is a non-abelian simple group all of whose proper subgroups are soluble. Thompson \cite[Corollary 1]{Thompson}  has determined the minimal simple groups:  every minimal simple group is isomorphic to one of the following groups: $L_2(2^p)$, $L_2( 3^p)$, $L_2(p)$, ${\rm Sz}(2^p)$, where $p$ is an odd prime, $L_2(4)$ and $L_3(3)$.  The next result gives the first pair of equivalences in Theorem \ref{introfour}.

\begin{theorem} \label{clique}
A finite group $G$ is soluble if and only if for each conjugacy class ${\cal C}$ of $G$, the induced subgraph $\Gamma_{\cal S}({\cal C})$ is a clique.
\end{theorem}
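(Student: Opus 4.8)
The plan is to prove the non-trivial implication: if $\Gamma_{\cal S}({\cal C})$ is a clique for every conjugacy class ${\cal C}$ of $G$ --- equivalently, if $\langle x, x^g\rangle$ is soluble for all $x,g\in G$ --- then $G$ is soluble. The converse needs no argument, since if $G$ is soluble then $\langle x,y\rangle$ is soluble for all $x,y\in G$, so $\Gamma_{\cal S}(G)$ is complete and a fortiori each induced subgraph $\Gamma_{\cal S}({\cal C})$ is a clique. I would prove the forward direction by induction on $|G|$, taking $G$ to be a counterexample of least order.

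The first step is to observe that the hypothesis is inherited by subgroups and by quotients. If $H\leqslant G$, then any two elements conjugate in $H$ are conjugate in $G$ and hence generate a soluble subgroup, so $H$ inherits the hypothesis and, by minimality of $|G|$, is soluble whenever $H<G$. If $N\trianglelefteq G$, then any two conjugate elements of $G/N$ are the images of conjugate elements $x, x^g$ of $G$, so $\langle xN, x^gN\rangle$ is an epimorphic image of the soluble group $\langle x, x^g\rangle$ and hence soluble; thus $G/N$ inherits the hypothesis and, by minimality, is soluble whenever $N>1$. Consequently $G$ has no non-trivial proper normal subgroup $N$, for such an $N$ would be soluble (as a proper subgroup) with soluble quotient $G/N$, forcing $G$ soluble. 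Hence $G$ is simple; it is non-abelian since it is insoluble, and all of its proper subgroups are soluble, so $G$ is a minimal simple group.

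Next I would invoke Thompson's classification of minimal simple groups recalled above, so that $G$ is one of $L_2(2^p)$, $L_2(3^p)$, $L_2(p)$, ${\rm Sz}(2^p)$, $L_2(4)$, or $L_3(3)$. To reach a contradiction it suffices to exhibit in each of these a pair of conjugate elements that generates the whole group, since such a pair would generate the insoluble group $G$ and yet, by hypothesis, must generate a soluble subgroup. I would construct the pair from a generator $t$ of a well-chosen cyclic subgroup $T$ together with a conjugate $t^g$ with $g\notin N_G(T)$; for all but the two smallest of these groups $T$ can be taken to be a cyclic maximal torus of order $r\geqslant 5$ --- of order $(q+1)/\gcd(2,q-1)$ in $L_2(q)$, of order $q+\sqrt{2q}+1$ in ${\rm Sz}(q)$, and of order $13$ in $L_3(3)$. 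Then Dickson's description of the subgroups of $L_2(q)$, together with the known lists of maximal subgroups of ${\rm Sz}(q)$ and of $L_3(3)$, shows that no proper subgroup of $G$ can contain two distinct conjugates of $T$: the Borel subgroups and the normalizer of a split torus have order coprime to $r$, the group $N_G(T)$ has a unique cyclic subgroup of order $r$, and the remaining maximal subgroups --- copies of $A_4$, $S_4$, or soluble subfield subgroups (none of them insoluble, as $G$ is minimal simple) --- have no cyclic subgroup of order $r\geqslant 5$. Hence $\langle t, t^g\rangle=G$.

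The step I expect to require the most care --- the main obstacle --- is handling the two small exceptions. For $G=L_2(4)\cong A_5$ the relevant torus has order $5$ in the guise $L_2(4)$ but only $3$ in the guise $L_2(5)$, so one should work with $L_2(4)$; explicitly, the conjugate $3$-cycles $(1\,2\,3)$ and $(1\,4\,5)$ generate $A_5$ because their product is a $5$-cycle and the only subgroup of $A_5$ of order divisible by $15$ is $A_5$ itself. For $G=L_2(7)\cong L_3(2)$ one instead takes $T$ to be a Sylow $7$-subgroup: $N_G(T)$ has order $21$, and since $L_2(7)$ has no proper subgroup of order divisible by $7$ and larger than $21$, any two distinct Sylow $7$-subgroups generate $L_2(7)$. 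Alternatively, this whole case analysis can be replaced by the known fact that every finite non-abelian simple group is generated by two conjugate elements. Once conjugate generators are produced in every minimal simple group, the induction closes and $G$ is soluble, as required.
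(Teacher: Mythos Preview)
Your argument is correct and follows essentially the same route as the paper's: reduce a minimal counterexample to a minimal simple group by showing the hypothesis passes to subgroups and quotients, then for each group on Thompson's list exhibit two conjugate elements that generate it. The paper's case analysis differs only in detail---it chooses a prime $q$ so that the cyclic Sylow $q$-subgroup $Q$ has the property that every maximal subgroup containing $Q$ normalizes $Q$ (taking $q=p$ for $L_2(p)$, $q\mid 2^p{+}1$ for $L_2(2^p)$, $q\mid 2^{2p}{+}1$ for ${\rm Sz}(2^p)$, $q=13$ for $L_3(3)$), which makes the ``two conjugates force $\langle a,a^g\rangle=G$'' step immediate and avoids your separate treatment of $L_2(7)$. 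One small slip to fix in your write-up: the claim that the normalizer of a split torus has order \emph{coprime} to $r=|T|$ is not literally true (e.g.\ in $L_2(23)$ one has $r=12$ while that normalizer has order $22$); what you need, and what does hold, is simply that it contains no cyclic subgroup of order $r$.
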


\begin{proof}   
If $G$ is soluble, the conclusion is clear. 

Conversely,  suppose for every conjugacy class ${\cal C}$ of $G$, the induced subgraph $\Gamma_{\cal S}({\cal C})$ is a clique. The proof will be by contradiction and we let $G$ be a counterexample of minimal order.   Let $H$ be a proper subgroup of $G$ and $x\in H$.  If ${\cal C}_H$ and ${\cal C}_G$ denote the conjugacy classes of $H$ and $G$ containing $x$, respectively, then obviously ${\cal C}_H\subseteq {\cal C}_G$. It follows that $\Gamma_{\cal S}({\cal C}_H)$ is also a clique, and so $H$ satisfies the hypothesis of $G$.  Finally, the minimality of $G$ implies that every proper subgroup of $G$ is soluble. 

Next we claim that $G$ is a simple group. To prove this, let $N$ be a normal subgroup of $G$ and let $a, g \in G$.  It is easy to see that 
$$\langle aN, (aN)^{gN} \rangle  = \frac{\langle a, a^g \rangle N}{N} \cong \frac{\langle a, a^g \rangle}{\langle a, a^g \rangle \cap N}. $$ 
Notice that the assumption that $\Gamma_{\cal S} ({\cal C})$ is a clique for every conjugacy class ${\cal C}$ implies that $\langle a, a^g \rangle$ is soluble for all pairs of elements $a, g \in G$.  It follows that $\langle aN, (aN)^{gN} \rangle$ will be soluble.  We deduce that $G/N$ satisfies the hypotheses.  By the minimality of $G$, if $G > N > 1$, then $G/N$ is soluble.  As $N$ is also soluble, this implies that $G$ is soluble which is a contradiction.

Hence, we may assume that $G$ is simple and as noted, $G$ will be a minimal simple group.  It suffices to show for each minimal simple group that there is a conjugacy class so that $\Gamma_{\cal S} ({\cal C})$ is not a clique.  To do this, we claim that for each minimal simple group $S$, there is a prime $q$ so that $S$ has a cyclic Sylow subgroup $Q$ and all of the maximal subgroups containing $Q$ normalize $Q$.  

Assuming this claim is true, let $a$ be a generator for $Q$ and let $g$ be an element of $S$ that does not normalize $\langle a \rangle$.  Then it will follow that $\langle a, a^g \rangle$ contains at least two distinct Sylow $q$-subgroups, and so, it cannot be contained in any maximal subgroup of $S$.  This implies that $S = \langle a, a^g \rangle$.  Since $S$ is not soluble, this implies that $\Gamma_{\cal S} ({\cal C})$ is not a clique when ${\cal C}$ is the conjugacy class containing $a$.

When $S$ is $L_2 (2^p)$, we take $q$ to be a prime divisor of $2^p+1$; when $S$ is $L_2 (3^p)$, take $q$ to be an odd prime divisor of $3^p+1$; when $S$ is $L_2 (p)$, take $q = p$; when $S$ is $L_ 2(4)$, take $q= 5$; when $S$ is ${\rm Sz} (2^p)$, take $q$ to be a prime divisor of $2^{2p}+1$; and when $S$ is $L_3(3)$, take $q = 13$.  To see that these groups have the desired property, we use Dickson's classification of the subgroups of $L_2(q)$ \cite{Dickson} for those 
groups (see also Hauptsatz II.8.27, p. 213, of \cite{Huppert1}).  For the Suzuki groups, we can use Suzuki's original paper \cite{Suzuki-d} (see also Remark IX.3.12 in \cite{Huppert-Blackburn}).  The result for $L_ 3(3)$ can be read from the Atlas of Finite Groups \cite{atlas}. 
\end{proof}

\section{The Soluble Grueneberg-Kegel Graph}
We make a new definition and a few observations before going on to prove anything.  For a group $G$, we denote by $\pi(G)$ the set of prime divisors of $|G|$. We define a graph with $\pi(G)$ as its vertex set by linking $p, q\in \pi(G)$ if and only if there exists a soluble subgroup $H$ of $G$ whose order is divisible by $pq$. This is called the {\em soluble Grueneberg-Kegel graph} of $G$  and is denoted by  $\Gamma_{\rm s}(G)$.  For two primes $p, q\in \pi(G)$, we will write $p\approx q$ if $p$ and $q$ are adjacent in $\Gamma_{\rm s}(G)$. This graph was first introduced by Abe and Iiyori in \cite{Abe-Iiyori}.  In Theorems 1 and 2 in \cite{Abe-Iiyori}, they prove that if $G$ is a nonabelian simple group, then $\Gamma_{\rm s}(G)$ is connected, but not complete.

When $p\approx q$ in $\Gamma_{\rm s}(G)$, there exists, by the definition of $\Gamma_{\rm s}(G)$, a soluble subgroup $H$ of $G$ such that $|H|$ is divisible by $pq$.  Thus, we can find elements $x, y \in H$ with $|x| = p$, $|y|= q$, and $\langle x, y \rangle$ is soluble. Hence, $x \sim y$ in $\Gamma_{\cal S}(G)$.

We now prove the remaining equivalences for Theorem \ref{introfour}.

\begin{theorem} \label{conj int}
Let ${\cal C}_1, {\cal C}_2, \ldots, {\cal C}_k$ be the distinct conjugacy classes of a group $G$.  Then $G$ is soluble if and only if, $\mathsf{Sol}_G(x) \cap {\cal C}_i\neq \emptyset$ for every element $x\in G$ and for every integer $i$ satisfying $1 \leqslant i \leqslant k$.
\end{theorem}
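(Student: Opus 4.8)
The plan is to treat the forward implication as immediate and put all the work into the converse. If $G$ is soluble then $\mathsf{Sol}_G(x)=G$ for every $x$, so $\mathsf{Sol}_G(x)$ meets every conjugacy class. For the converse I would argue by induction on $|G|$, assuming that $\mathsf{Sol}_G(x)\cap{\cal C}\neq\emptyset$ for all $x\in G$ and all conjugacy classes ${\cal C}$ of $G$, and aiming to show that $G$ is soluble. First I would record that this hypothesis passes to quotients: for $N\trianglelefteq G$, every conjugacy class of $G/N$ is the image of one of $G$, and since $\langle xN,yN\rangle\cong\langle x,y\rangle/(\langle x,y\rangle\cap N)$ is a homomorphic image of $\langle x,y\rangle$, solubility of $\langle x,y\rangle$ forces solubility of $\langle xN,yN\rangle$; hence $G/N$ satisfies the same hypothesis. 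In particular, if $R(G)\neq 1$ then $G/R(G)$ is soluble by induction, and since $R(G)$ is soluble so is $G$. So I may assume $R(G)=1$ and, for contradiction, that $G$ is insoluble.

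Since $R(G)=1$, no nontrivial normal subgroup of $G$ is soluble, so a minimal normal subgroup $N$ of $G$ has the form $N=S_1\times\cdots\times S_t$ with each $S_i\cong S$ a fixed non-abelian simple group. Here I would bring in the Abe--Iiyori theorem quoted earlier: $\Gamma_{\rm s}(S)$ is not complete, so there are distinct primes $p,q\in\pi(S)$ with $p\not\approx q$ in $\Gamma_{\rm s}(S)$, that is, $S$ has no soluble subgroup of order divisible by $pq$. By Cauchy's theorem I would pick $a_0\in S$ of order $q$ and $b_0\in S$ of order $p$, and set
$$a=(a_0,1,\dots,1)\in N\leqslant G, \qquad b=(b_0,b_0,\dots,b_0)\in N\leqslant G,$$
taking ${\cal C}$ to be the $G$-conjugacy class of the ``diagonal'' element $b$.

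The heart of the argument is then to show $\mathsf{Sol}_G(a)\cap{\cal C}=\emptyset$, contradicting the hypothesis. Given $y\in{\cal C}$, I would note first that $y\in N$ because $N\trianglelefteq G$, and that conjugation by the element of $G$ carrying $b$ to $y$ induces an automorphism of $N=S^t$, hence an element of $\mathrm{Aut}(S)\wr S_t$; as all coordinates of $b$ equal $b_0$, the first-coordinate projection $\pi_1(y)\in S$ is therefore an automorphic image of $b_0$ and has order $p$. Consequently $\langle a_0,\pi_1(y)\rangle$ is a subgroup of $S$ of order divisible by $pq$, hence insoluble by our choice of $p,q$; and since $\pi_1$ maps $\langle a,y\rangle$ onto $\langle a_0,\pi_1(y)\rangle$, the subgroup $\langle a,y\rangle$ is insoluble too, so $y\notin\mathsf{Sol}_G(a)$. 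This gives the contradiction, and together with Theorem \ref{clique} it completes Theorem \ref{introfour}.

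The step I expect to be the real obstacle --- and which forces the overall shape of the proof --- is the reduction. Unlike the clique hypothesis handled in Theorem \ref{clique}, the condition ``$\mathsf{Sol}_G(x)$ meets every conjugacy class'' does not obviously restrict to subgroups, since a single $G$-class can break up into several classes of a subgroup; so the induction must run through quotients and through the structure of a minimal normal subgroup rather than through ``all proper subgroups are soluble''. The second point requiring care is the choice of $b$: making it diagonal is exactly what guarantees that every $G$-conjugate of $b$ still has a first coordinate of order $p$, which is what keeps $\langle a,y\rangle$ insoluble for all $y\in{\cal C}$. The remaining details are routine.
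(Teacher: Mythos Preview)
Your proof is correct and follows essentially the same strategy as the paper: reduce via quotients to a minimal normal subgroup $N\cong S^t$ with $S$ non-abelian simple, invoke Abe--Iiyori to find primes $p,q$ non-adjacent in $\Gamma_{\rm s}(S)$, and exhibit elements of $N$ witnessing a failure of the hypothesis. The only cosmetic differences are that you reduce first to $R(G)=1$ (which lets you skip the paper's separate step showing $G$ is not simple) and that you take $a=(a_0,1,\dots,1)$ together with a diagonal $b$ and project to a single coordinate, whereas the paper takes both test elements with all coordinates nontrivial and projects to every coordinate.
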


\begin{proof}  
If $G$ is soluble, then for every element $x\in G$, we have $\mathsf{Sol}_G(x) = G$, and so $$\mathsf{Sol}_G(x)\cap {\cal C}_i=G\cap {\cal C}_i={\cal C}_i \neq \emptyset$$  for every integer $i$ satisfying $1\leqslant i\leqslant k$. 

Conversely, assume for every element $x \in G$ that $\mathsf{Sol}_G(x)\cap {\cal C}_i\neq \emptyset$ for  $1\leqslant i\leqslant k$. Let $G$ be a minimal counterexample to the claim (a minimal order insoluble group for which the condition holds).  We now consider the soluble Grueneberg-Kegel graph $\Gamma_{\rm s}(G)$ of $G$. We claim that $\Gamma_{\rm s}(G)$ is complete. To see this, let $p$ and $q$ be primes that divide $|G|$.  We want to show that $G$ has a soluble subgroup whose order is divisible by $pq$.  We can find elements $x, y \in G$ so that $|x|= p$ and $|y| = q$.  By our hypothesis, there exists an element $g \in G$ so that $H = \langle x, y^g \rangle$ is soluble.  It follows that $pq$ divides $|H|$ which proves the claim.  Thus, $\Gamma_{\rm s} (G)$ is a complete graph.

As we noted above, it now follows from \cite[Theorem 2]{Abe-Iiyori} that $G$ cannot be a nonabelian simple group. Hence, we may choose $N$ to be a proper minimal normal subgroup of $G$.  First, fix the coset $xN\in G/N$, and let ${\cal C}_{G/N}$ and ${\cal C}_G$ be the conjugacy classes of $G/N$ and $G$ containing $xN$ and $x$, respectively.  By hypothesis we have $\mathsf{Sol}_G(x) \cap {\cal C}_G\neq \emptyset$.  This means that for some $g$ in $G$, $\langle x, y^g\rangle$ is soluble. But then, we have 
$$\langle xN, (yN)^{gN}\rangle=\frac{\langle x, y^g\rangle N}{N}\cong \frac{\langle x, y^g\rangle}{\langle x, y^g\rangle \cap N},$$ 
which shows that $\langle xN, (yN)^{gN}\rangle$ is soluble. Thus, $G/N$ also satisfies the hypothesis of the theorem. By the minimality of $G$ it follows that $G/N$ is soluble. Thus, since $G$ is insoluble, $N$ is an insoluble minimal normal subgroup of $G$. 

Observe that $N\cong N_1\times N_2\times \cdots \times N_k$ where the $N_i$'s are isomorphic to a nonabelian simple group.  Write $P$ for the nonabelian simple group which is isomorphic to $N_i$ for $1\leqslant i\leqslant k$. In view of Theorem 2 of \cite{Abe-Iiyori}, $\Gamma_{\rm s}(P)$ is not complete, and so it contains two nonadjacent vertices, say $r$ and $s$.  Using the observation before this lemma, we have for all elements $x, y\in P$ of orders $r$ and $s$, respectively, that the subgroups $\langle x, y\rangle$ are insoluble.  Take $u = (u_1, u_2, \ldots, u_k)$ and $v = (v_1, v_2, \ldots, v_k)$ to be elements of $N$ with $|u_i|=r$ and $|v_i|=s$ for each $1\leqslant i\leqslant k$.  If the element $g \in G$ is arbitrary, then $\langle u, v^g\rangle$ is a subgroup of $N$ for which the projection to each direct factor $N_i$ of $N$ is a subgroup $\langle u_i, v_i^g\rangle$ with $|u_i| = p$, $|v_i^g| = q$, and hence is insoluble. In particular, $\langle u, v^g\rangle$ is insoluble. Since $g \in G$ was arbitrary, we conclude that $\mathsf{Sol}_G (u)$ has a trivial intersection with the conjugacy class of $G$ containing $v$, a contradiction. 
\end{proof}

We also will use the soluble Grueneberg-Kegel graph to prove Theorem \ref{main one} which we restate here.

\begin{theorem}\label{connected} 
For every group $G$, the solubility graph $\Delta_{\cal S}(G)$ is connected, and its diameter is at most $11$.
\end{theorem}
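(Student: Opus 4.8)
The plan is to establish connectivity via the soluble Grueneberg–Kegel graph $\Gamma_{\rm s}(G)$, exploiting the observation already recorded in the paper: if $p\approx q$ in $\Gamma_{\rm s}(G)$, then there are elements $x,y\in G$ with $|x|=p$, $|y|=q$ and $x\sim y$ in $\Gamma_{\cal S}(G)$. More generally, I would first prove the following bridging lemma: for any $x\in G\setminus R(G)$ and any prime $p\in\pi(G)$, the element $x$ can be joined in $\Delta_{\cal S}(G)$, in a bounded number of steps, to \emph{some} element of order $p$. To see this, note $x$ lies in a soluble subgroup $H$ properly containing $\langle x\rangle$ (Corollary \ref{xsol}); pick a prime $r$ dividing $|x|$ and an element $z$ of order $r$ inside a soluble overgroup of $x$, so $x\sim z$ (and $z\notin R(G)$, since $R(G)$ contains no element outside it that is adjacent to one — more carefully, one must check the path stays in $G\setminus R(G)$, which is where the argument needs care). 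Then walk from the prime $r$ to the prime $p$ along a path in $\Gamma_{\rm s}(G)$, translating each edge $p_i\approx p_{i+1}$ into a soluble subgroup $H_i$ with $p_ip_{i+1}\mid |H_i|$ and hence into two adjacent elements of orders $p_i$ and $p_{i+1}$; consecutive soluble subgroups $H_{i-1}$ and $H_i$ both contain an element of order $p_i$, and any two elements of order $p_i$ lying respectively in $H_{i-1}$ and $H_i$ — while not adjacent to each other directly — can be linked through the element of order $p_i$ that one picks to be \emph{common} if the $H_i$ are chosen to share a Sylow $p_i$-subgroup. The cleanest way is: within each $H_i$ fix an element $t_i$ of order $p_i$ and $t_{i+1}'$ of order $p_{i+1}$ with $t_i\sim t_{i+1}'$; then $t_{i+1}'$ and $t_{i+1}$ (the chosen one in $H_{i+1}$) are both of order $p_{i+1}$, and since all soluble subgroups used have order divisible by $p_{i+1}$ we can further route $t_{i+1}'\sim s\sim t_{i+1}$ through a soluble subgroup containing both if one exists, or accept a short detour.

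To make the bound explicit and rigorous, I would actually argue as follows. Fix a prime $p$; all elements of $G$ of order $p$ are conjugate to an element of a fixed Sylow $p$-subgroup $P$, and any two elements of $P$ of order $p$ generate a $p$-group, hence are adjacent in $\Gamma_{\cal S}(G)$ (and, being insoluble, $G\ne P$, so these elements lie outside $R(G)$ provided $P\not\le R(G)$). Thus the set $T_p$ of all order-$p$ elements \emph{of the fixed $P$} forms a clique in $\Delta_{\cal S}(G)$. Next, if $p\approx q$ in $\Gamma_{\rm s}(G)$, there is a soluble $H$ with $pq\mid|H|$; by Hall's theorem $H$ has a $\{p,q\}$-Hall subgroup, inside which some Sylow $p$-subgroup and some Sylow $q$-subgroup are present; a suitable $G$-conjugate of $H$ meets both $T_p$ and $T_q$, giving a path of length at most $2$ from $T_p$ to $T_q$ (one step inside the conjugate of $H$, using that conjugation is a graph automorphism of $\Gamma_{\cal S}(G)$). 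Since $\Gamma_{\rm s}(G)$ is connected when $G$ is insoluble (this follows from \cite{Abe-Iiyori} applied to a simple section, or can be deduced directly), any two "prime-clique" regions $T_p$, $T_q$ are connected by a path of length at most $2\,{\rm diam}(\Gamma_{\rm s}(G))$ in $\Delta_{\cal S}(G)$. Finally, an arbitrary vertex $x\in G\setminus R(G)$ is joined to some $T_r$ in at most two steps: $x$ lies in a soluble $H>\langle x\rangle$, which contains an element $z$ of prime order $r$ with $x\sim z$, and $z$ is $G$-conjugate into $T_r$, giving $x\sim z\sim z'\in T_r$ via a conjugate of a Sylow $r$-subgroup. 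Assembling, ${\rm diam}(\Delta_{\cal S}(G))\le 2+2\,{\rm diam}(\Gamma_{\rm s}(G))+2$. Since $\Gamma_{\rm s}(G)$ has at most $|\pi(G)|$ vertices and, for insoluble $G$, one checks $\pi(G)$ is small enough — more to the point, Abe–Iiyori give ${\rm diam}(\Gamma_{\rm s}(S))\le$ a small constant for simple $S$ — one arrives at the stated bound $11$.

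\textbf{The main obstacle} will be keeping every vertex on the path strictly outside $R(G)$: elements of small prime order may accidentally lie in $R(G)$, and $R(G)$-cosets behave subtly. The clean fix is to pass to $\overline G=G/R(G)$ using Lemma \ref{quotient}, which shows $\mathsf{Sol}_{\overline G}(\overline x)=\mathsf{Sol}_G(x)/R(G)$, so that $x\sim y$ in $\Delta_{\cal S}(G)$ whenever $\overline x\sim\overline y$ in $\Delta_{\cal S}(\overline G)$ and $x,y\notin R(G)$; hence it suffices to prove connectivity and the diameter bound for groups with trivial soluble radical. For such $G$, every nontrivial element lies outside $R(G)=1$, $\Delta_{\cal S}(G)=\Gamma_{\cal S}(G)$ minus the identity, and the prime-order elements we select are automatically legitimate vertices. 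The second obstacle is getting the constant down to exactly $11$: this requires a careful bookkeeping of the lengths — the "$x$ to $T_r$" hop, the traversal of $\Gamma_{\rm s}(G)$, and using that the diameter of $\Gamma_{\rm s}(S)$ for $S$ simple is at most a specific small number (this is essentially in \cite{Abe-Iiyori}) — but since the authors themselves say they do not believe $11$ is tight, a somewhat lossy accounting that still lands at $11$ suffices.
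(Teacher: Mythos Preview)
Your connectivity argument is essentially the paper's: reduce to $R(G)=1$ via Lemma~\ref{quotient}, walk along a path in the soluble Grueneberg--Kegel graph $\Gamma_{\rm s}(G)$ (connected by \cite{Abe-Iiyori}), and splice consecutive edges together using the fact that two elements of the same prime order lying in a common Sylow subgroup are adjacent in $\Delta_{\cal S}(G)$. The paper does exactly this, conjugating as it goes rather than anchoring at a fixed Sylow $T_p$.

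Two points deserve correction. First, your claim that ``a suitable $G$-conjugate of $H$ meets both $T_p$ and $T_q$'' is not justified: a single conjugation can place a Sylow $p$-subgroup of $H$ inside your fixed $P$, but there is no reason the Sylow $q$-subgroup of the same conjugate lands inside your fixed $Q$. This is why the paper's version carries the conjugating elements along the path instead of returning to a fixed Sylow at each prime; it costs one extra edge per step, not zero. This does not break connectivity, but it does affect the count.

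Second, and more substantively, your bookkeeping does not reach $11$. Your estimate ${\rm diam}(\Delta_{\cal S}(G))\le 4+2\,{\rm diam}(\Gamma_{\rm s}(G))$ requires ${\rm diam}(\Gamma_{\rm s}(G))\le 3$, which is not what is available: Hagie \cite{Hagie} proves only that the distance from $2$ to any prime in $\Gamma_{\rm s}(G)$ is at most $3$, so the full diameter could be as large as $6$, and Abe--Iiyori give no explicit diameter bound. The paper closes this gap with an idea you do not mention: any two involutions generate a dihedral group, hence a soluble group, so the \emph{entire} set of involutions of $G$ (not just those in a fixed Sylow $2$-subgroup) is a clique in $\Delta_{\cal S}(G)$. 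One therefore routes every vertex toward the prime $2$ along a $\Gamma_{\rm s}$-path of length at most $3$, reaching some involution in at most $5$ steps, and then any two such involutions are a single edge apart; this yields $5+1+5=11$. Without the involution clique your argument gives connectivity but only a bound of roughly $16$.
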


\begin{proof}  
First, we notice that $\Delta_{\cal S}(G)$ is connected if and only if $\Delta_{\cal S} (G/R(G))$ is connected.  Indeed, the point here is that 
$$\langle xR(G), yR(G) \rangle = \frac{\langle x, y \rangle R(G)}{R(G)} \cong \frac{\langle x, y 
\rangle}{\langle y, x \rangle \cap R(G)}.$$  
Since $\langle y, x \rangle \cap R(G)$ is always soluble, we conclude that  $\langle x, y \rangle$ is soluble if and only if $\langle xR(G), yR(G) \rangle$ is soluble, or equivalently, $x\sim y$ in $\Gamma_{\cal S}(G)$ if and only of $xR(G)\sim yR(G)$ in $\Gamma_{\cal S} (G/R(G))$. Thus, we may assume that $R(G) = 1$.

Let $I = {\rm Inv}(G)$ be the set of all involutions in $G$.  We know that any two involutions generate a dihedral group.  Since dihedral groups are always soluble, we see that $\Gamma_{\cal S}(I)$, the subgraph induced by $I$, is a complete graph.  Thus, it suffices to show that every element of $G \setminus I$ is connected to an involution.

Suppose $g\in G \setminus I$ is a nontrivial element.  We want a path from $g$ to an involution. First of all, there is a prime $p$ that divides $|g|$.  Hence, there is an integer $n$ so that $|g^n|= p$, and consequently  we have an edge between $g$ and $g^n$, that is $g\sim g^n$ in $\Delta_{\cal S}(G)$. On the other hand, using the fact that $\Gamma_{\rm s} (G)$ is connected (see \cite[Corollary 2]{Abe-Iiyori}), we can find a path, between $p$ and $2$ in $\Gamma_{\rm s} (G)$, say 
$$p=p_1 \ \approx \ p_2 \ \approx \ p_3 \ \approx  \  \cdots \ \approx  \ p_k = 2,$$  
Now, using the observation before Theorem \ref{conj int}, we can find the elements $y_i$ and $x_{i+1}$ in $G$ so that $|y_i|= p_i$ and $|x_{i+1}|= p_{i+1}$ and $y_i \sim x_{i+1}$ in $\Delta_{\cal S}(G)$.  Take $x_1 = g^n$.  Moreover, for $i = 1, \ldots, k-1$, observe that $|x_i|= |y_i| = p_i$.  Now, there exists an element $h_i \in G$ so that $x_i$ and $y_i^{h_i}$ lie in the same Sylow $p_i$-subgroup of $G$.  In particular, $\langle x_i, y_i^{h_i} \rangle$ is a $p_i$-group which implies it is soluble, and so $x_i\sim y_i^{h_i}$ in $\Delta_{\cal S}(G)$.  Furthermore, for each $i$, we observe that:
\begin{itemize}
\item[(1)] since $y_i\sim x_{i+1}$,  conjugating by $h_i\cdots h_1$ gives  
$y_i^{h_i\cdots h_1}\sim x_{i+1}^{h_i\cdots h_1}$, and
\item[(2)]  since $x_i\sim y_{i}^{h_i}$, conjugating by $h_{i-1}\cdots h_1$ gives  
$x_i^{h_{i-1}\cdots h_1}\sim y_i^{h_i(h_{i-1}\cdots h_1)}$.
\end{itemize}
Using the above observations, one can easily see that 
$$g\sim x_1 \sim  y_1^{h_1}  \sim \ x_2^{h_1}  \sim  y_2^{h_2 h_1}  \sim x_3^{h_2 h_1} \sim  y_3^{h_3 h_2 h_1} \ \sim \cdots  \sim \ x_k^{h_{k-1} \cdots h_{1}}$$ 
is a path in $\Delta_{\cal S} (G)$ from $g$ to the involution $x_k^{h_{k-1} \cdots h_{1}}$.  Hence, we have the desired path to an involution.   If $g_1$ and $g_2$ are any two elements in $G \setminus \{ 1 \}$, then we can find a path from $g_1$ to an involution $i_1$ and a path from $g_2$ to an involution $i_2$.  Since $i_1$ and $i_2$ are adjacent, we now obtain a path from $g_1$ and $g_2$.  This proves that $\Delta_{\cal S} (G)$ is connected.

In \cite[Theorem 2]{Hagie}, it is shown that the distance between $2$ and $p$ in $\Gamma_{\rm s}(G)$ is at most $3$ for any prime $p$ in $\pi (G)$.  Using this fact with the path above, one can show that every element of $G \setminus \{1\}$ has distance at most $5$ to an involution.  This shows that there is a path of length at most $11$ between any two elements of $G \setminus \{1\}$ in $\Delta_{\cal S} (G)$. 
\end{proof}

A group $G$ is said to be {\em soluble transitive} if for all $x, y, z\in G\setminus \{1\}$, the subgroups $\langle x, y \rangle$ and $\langle y, z \rangle$ soluble imply $\langle x, z\rangle$. In other words, if one defined the relation on $G$ that $x$ and $y$ are related if they generate a soluble, then the relation is transitive if and only if $G$ is soluble transitive.  In graph-theoretical terms, $G$ is a soluble transitive group precisely when every connected component of $\Gamma_{\cal S}(G \setminus \{ 1\})$ is a clique.  The following result, due to  Delizia, Moravec and Nicotera, characterizes the structure of ${\cal S}$-transitive groups (see \cite{DMN}): {\em A group is soluble transitive if and only if it is soluble}.  Notice that one can also obtain this conclusion by using Theorem \ref{main one} that $\Delta_{\cal S} (G)$ is connected with Thompson's theorem that $G$ is soluble if and only if $\Gamma_{\cal S} (G)$ is a complete graph.

\begin{center}
 {\sc Acknowledgments}
\end{center}
A part of this work was done during the first author had a visiting position at the
Faculty of Mathematics, K. N. Toosi University of Technology (June-September 2019). She would like to thank the hospitality of the Faculty of Mathematics of KNTUT.

\noindent {\sc B. Akbari}\\[0.2cm]
{\sc Department of Mathematics, Sahand University of Technology,
Tabriz, Iran.}\\[0.1cm]
{\em E-mail address}: {\tt  b.akbari@sut.ac.ir}\\[0.3cm]
\noindent {\sc Mark L. Lewis}\\[0.2cm]
{\sc Department of Mathematical Sciences, Kent State
University,}\\ {\sc  Kent, Ohio $44242$, United States of
America}\\[0.1cm]
{\em E-mail address}: {\tt  lewis@math.kent.edu}\\[0.3cm]
 {\sc J. Mirzajani and A. R. Moghaddamfar}\\[0.2cm]
{\sc Faculty of Mathematics, K. N. Toosi
University of Technology,
 P. O. Box $16765$--$3381$, Tehran, Iran,}\\[0.1cm]
{\em E-mail addresses}:  {\tt  jmirzajani@mail.kntu.ac.ir}, \  {\tt moghadam@kntu.ac.ir}\\[0.3cm]

\end{document}